\renewcommand{\a}{\alpha}
\newcommand{\be}{\beta}
\newcommand{\G}{\Gamma}
\newcommand{\de}{\delta}
\newcommand{\D}{\Delta}
\newcommand{\e}{\epsilon}
\newcommand{\ve}{\varepsilon}
\newcommand{\y}{\eta}
\newcommand{\io}{\iota}
\newcommand{\ka}{\kappa}
\newcommand{\la}{\lambda}
\newcommand{\La}{\Lambda}
\newcommand{\m}{\mu}
\newcommand{\n}{\nu}
\newcommand{\x}{\xi}
\newcommand{\X}{\Xi}
\newcommand{\s}{\sigma}
\newcommand{\Si}{\Sigma}
\newcommand{\vs}{\varsigma}
\newcommand{\vf}{\varphi}
\newcommand{\F}{\Phi}
\newcommand{\hi}{\chi}
\newcommand{\om}{\omega}
\newcommand{\Om}{\Omega}
\newcommand{\U}{\Upsilon}
\newcommand{\R}{{\mathbb R}}
\newcommand{\N}{{\mathbb{N}}}
\newcommand{\db}{{\mathbf d}}
\newcommand{\eb}{{\mathbf e}}
\newcommand{\kb}{{\mathbf k}}
\newcommand{\nb}{{\mathbf n}}
\newcommand{\tb}{{\mathbf t}}
\newcommand{\xb}{{\mathbf x}}
\newcommand{\yb}{{\mathbf y}}
\newcommand{\Db}{{\mathbf D}}
\newcommand{\Fb}{{\mathbf F}}
\newcommand{\Jb}{{\mathbf J}}
\newcommand{\Lb}{{\mathbf L}}
\newcommand{\Mb}{{\mathbf M}}
\newcommand{\Nb}{{\mathbf N}}
\newcommand{\Qb}{{\mathbf Q}}
\newcommand{\Tb}{{\mathbf T}}
\newcommand{\Zb}{{\mathbf Z}}
\newcommand{\AF}{\mathfrak A}
\newcommand{\BFB}{\mathfrak B}
\newcommand{\dF}{\mathfrak d}
\newcommand{\WF}{\mathfrak W}
\newcommand{\XF}{\mathfrak X}
\newcommand{\Ac}{{\mathcal A}}
\newcommand{\Dc}{{\mathcal D}}
\newcommand{\Ec}{{\mathcal E}}
\newcommand{\Hc}{{\mathcal H}}
\newcommand{\Kc}{{\mathcal K}}
\newcommand{\Lc}{{\mathcal L}}
\newcommand{\Mc}{{\mathcal M}}
\newcommand{\Yc}{{\mathcal Y}}
\newcommand{\Zc}{{\mathcal Z}}
\newcommand{\dist}{{\rm dist}\,}
\newcommand{\supp}{\hbox{{\rm supp}}\,}
\newcommand{\codim}{\operatorname{codim\,}}
\newcommand{\sign}{\operatorname{sign\,}}
\def\titlerunning#1{\gdef\titrun{#1}}
\def\author#1{\gdef\autrun{\def\and{\unskip, }#1}\gdef\@author{#1}}
\def\address#1{{\def\and{\\\hspace*{15.6pt}}\renewcommand{\thefootnote}{}\footnote{#1}}\markboth{\autrun}{\titrun}}
\def\email#1{email: \href{mailto:#1}{#1} }
\newenvironment{dedication}{\itshape\center}{\par\medskip}
\newenvironment{acknowledgments}{\bigskip\small\noindent\textit{Acknowledgments.}}{\par}
\newtheorem{thm}{Theorem}[section]
\newtheorem{cor}[thm]{Corollary}
\newtheorem{lem}[thm]{Lemma}
\theoremstyle{definition}
\newtheorem{defin}[thm]{Definition}
\numberwithin{equation}{section}
\begin{document}

\titlerunning{Eigenvalues of singular measures}

\title{\textbf{Eigenvalue estimates and asymptotics for weighted pseudodifferential operators with singular measures in the critical case}}

\author{Grigori Rozenblum  \and Eugene Shargorodsky}

\date{}
\maketitle

\address{G. Rozenblum: Chalmers Univ. of Technology; EIMI, and St.Petersburg State University; \email{grigori@chalmers.se} \and E.Shargorodsky: King's College London
and Technische Universit\"at Dresden; \email{eugene.shargorodsky@kcl.ac.uk} }

\begin{dedication}
To Ari Laptev, with best wishes, on the occasion of his 70th birthday
\end{dedication}


\section{Introduction}
We study the eigenvalue distribution for self-adjoint compact operators of the type $\AF^* P \AF$, were $\AF$ is a pseudodifferential operator of negative order $-l$ in a domain $\Om\subset \R^\Nb$ and $P$ is a  signed measure in $\Om$. A number of  spectral problems can be reduced to this one, the most important one, probably,  being $\la(-\Delta)^{l} u = Pu$, closely related to the Schr\"odinger operator.  In particular, if $P$ is an absolutely continuous measure, the spectral asymptotics for such operators has been justified under rather mild conditions imposed on $P$, see \cite{BS}. The case of a singular measure on $\Om$ is not that well studied.  In 1951, M.G.Krein discovered that for the `singular string' described by the equation $-\la u''= P u$ with the Dirichlet boundary conditions at the endpoints of an interval and with $P$ being a Borel measure, the leading  term in the  asymptotics of the eigenvalues is determined by the absolutely continuous part $P_{ac}$ of $P$ only, while the singular part $P_{sing}$ makes a weaker contribution. Further on, in papers by M.Sh.~Birman, M.Z. Solomyak, and V.V. Borzov, see, e.g.  \cite{BS}, this property of singular measures was proved for a wide class of `high order' spectral problems, in particular for $\la (-\Delta)^l u=P u$ in a domain  $\Om\subset\R^\Nb$, provided $2l>\Nb$.  For  `low order problems', $2l<\Nb$, the influence of the singular part of the measure is different. The known cases concern $P_{sing}$ concentrated on a surface of codimension 1, and here, in the opposite, it makes a leading order contribution to  the spectral asymptotics, see \cite{Grubb}. The intermediate, `critical' case $2l=\Nb$ has been studied even less (it is the common wisdom that for many questions in spectral theory this case is the hardest one). Until very recently, there were very few results here. In dimension $\Nb=2$, if a measure $P$ is concentrated at the boundary of $\Om$ (which is equivalent to the Steklov problem) or on a smooth curve inside $\Om$, the     eigenvalue asymptotics has the same order as for the regular Dirichlet problem, namely, $\la_k\sim C k^{-1}$, see also general results  in \cite{Agr}, \cite{KozhYak}, \cite{Grubb}.

Recently, the interest to the critical case has revived, due to some new applications, see, e.g., \cite{ShStokes}, \cite{LSZ}. So, a class of singular measures was considered in  \cite{KarSh}. For a singular measure $P=V\m$ in $\R^2$, where $\m$ is Ahlfors $\a$-regular, $\a\in(0,2),$ and $V$ belongs to a certain Orlicz class, an estimate for the eigenvalues of the problem $-\la\Delta u=P u$ was obtained, $|\la_k|\le C(V,\m) k^{-1}$. Thus, unlike other cases, the order in the eigenvalue \emph{estimate} does not depend on the  (Hausdorff) dimension of the support of the measure. The question of sharpness of these estimates was not touched upon. In another field,  the critical case was involved in studies related  to noncommutative integration, see \cite{LSZ}.

In this paper, we consider a class of singular measures in the critical case and prove order sharp estimates and asymptotic formulae for eigenvalues for the corresponding spectral problems. Namely, the measure $P=P_{sing}$ is supposed to be supported on a compact Lipschitz surface $\Si$ of codimension $\dF$ in $\R^\Nb$ and absolutely continuous with respect to the surface measure $\m_\Si$ induced by the embedding of $\Si$ into $\R^{\Nb}$, $P=V\mu_\Si$.  We find that the  eigenvalues $\la_k$ have asymptotics  of order $k^{-1}, $ so the order of asymptotics does not depend on the dimension or codimension of the surface. We consider the case of  compactly supported measures only and therefore do not touch upon  effects related with infinity, which, as it is known, may influence the eigenvalue behavior drastically, even for a nice measure, see, e.g.,  \cite{BLS}. If $P$ is concentrated on several surfaces, of different dimensions, their contributions to the eigenvalue asymptotics add up.  The same  happens if the measure has both an absolutely continuous and a singular parts.

In the usual way, the estimates and asymptotics of eigenvalues of the weighted problem, by means of the Birman-Schwinger principle, produce similar results for the number of negative eigenvalues of the Schr\"odiger-like operator as the coupling constant grows. This relation was explored in \cite{KarSh}.

Some of the results of the paper were presented in the short note \cite{RSh} without proofs.

\section{Setting and main results}\label{Setting} Since the pathbreaking papers by M.Sh. Birman and M.Z. Solomyak, it is known that it is often useful to reduce a spectral problem for a differential equation to an eigenvalue problem for a compact operator. We consider a class of compact operators encompassing the above problems, as well as their pseudodifferential versions, as particular cases. Let $\AF$ be an operator in a bounded domain $\Om\subset\R^\Nb$, such that the localization of $\AF$ to a  proper subdomain $\Om'\subset\Om$ is a pseudodifferential operator of order $-l=-\Nb/2$, up to a smoothing additive term.  The operators $(-\Delta)^{-l/2}$ and $(-\Delta+1)^{-l/2}$ in $\R^d$, restricted to $\Om$,
or $(-\D_{\Dc})^{-l/2}$, where $-\D_{\Dc}$ is the Dirichlet Laplacian in $\Om$, are typical examples of such $\AF$. By localisation we mean multiplication of $\AF$ on both sides by a smooth cut-off function $\vs\in C_0^\infty(\Om),\, \vs|_{\Om'}=1.$  Our results do not depend on the particular localisation chosen, see Section \ref{Reductions}; it is convenient to assume that the above multiplication has already been performed and the cut-offs are incorporated in $\AF$.

 Let $\Si$ be a compact Lipschitz surface  in $\Om'$, of dimension $d, \, 1\le d\le \Nb-1$ and, correspondingly, of  codimension $\dF=\Nb-d$.  Thus, locally, in appropriate coordinates, $X=(\xb;\yb) := (x_1,\dots,x_d; y_{d+1},\dots,y_{\dF})$, the corresponding piece of the surface $\Si$  is described by an equation $\yb=\vf(\xb)$, where $\vf$ is a Lipschitz vector-function. For brevity, we  describe our constructions  in a single co-ordinate neighborhood; the resulting formulae are glued together in a standard manner.  The embedding $\Si\subset\R^\Nb$ generates the surface measure $\m_\Si$ on $\Si$, $d\m_\Si = \s(\xb)d\xb,\, \s(\xb) = [\det(\mathbf{1}+(\nabla \vf)^*(\nabla\vf) )]^{\frac12}d\xb$,
 which, in turn, generates a singular measure on $\Om$, supported in $\Si$, which we also denote by $\mu_\Si,$ as long as this does not cause confusion.

Let $\m$ be a Radon measure on $\Om$. We denote by $\Mb$ its support, the smallest closed set of full measure.
The Orlicz space $L^{\Psi}(\Mb,\m), $ $\Psi(t)=(1+t)\log(1+t)-t,$ consists of $\m$-measurable functions $V$ on $\Mb$, satisfying $\int_{\Mb}\Psi(|V(X)|)d\m(X)<\infty$. For a $\m$-measurable subset $E\subset\Om$, we define the norm
 \begin{equation}\label{AvNorm}
   \|V\|_{E}^{(av,\Psi,\m)} :=\sup\left\{\left|\int_{E\cap\Mb}Vg d\m\right|:\int_{E\cap\Mb}\F(|g|)d\m\le\m(E\cap\Mb)\right\},
 \end{equation}
 if $\m(E\cap\Mb)>0,$ and $\|V\|_{E}^{(av,\Psi,\mu)}=0$ otherwise; here $\F$ is the Orlicz complementary function to $\Psi$, $\F(t)=e^t-1-t$.
Such \emph{averaged} norms, first introduced by M.Z.Solomyak in \cite{Sol94}, have played an important role in the study of the eigenvalue distribution in the critical case.
For a real-valued function $V\in L_1(\Mb,\mu)$, we consider the quadratic form
\begin{equation}\label{eq1}
    \Fb_V[u] = \Fb_V[u;\AF;\m] := \int_{\Mb} V(X)|(\AF u)(X)|^2 d\m(X), \quad u\in L_2(\Om).
\end{equation}
We will see later that, defined initially on continuous functions,  this quadratic form is bounded in $L_2(\Om),$  as soon as $V$ belongs to  $L^{\Psi}(\Mb,\m),$ and can be extended by
continuity  to the whole of $L_2(\Om);$ in this way it defines
 a bounded selfadjoint operator.
  This operator is denoted by $\Tb(V,\m)=\Tb(V,\m, \AF)$.

  The case of principal interest for us is $\mu$ being the measure $\mu_\Si$ and $\Mb$ being the surface $\Si.$  Here we will use the notation $ L^{\Psi} = L^{\Psi}(\Si)$
  for the Orlicz space,  $\|V\|_{E}^{(av,\Psi)}$ for the averaged norm and $\Tb(V)\equiv\Tb(V,\Si)\equiv\Tb(V,\Si,\AF)$ for the operator defined by the form \eqref{eq1} with $\m=\mu_\Si$.

For a compact self-adjoint operator $\Tb$ in a Hilbert space, we denote by $\la_k^{\pm}(\Tb)$ the positive (negative) eigenvalues of $\Tb$ in the non-increasing order of their absolute values, repeated according to their multiplicities. By $n_{\pm}(\la,\Tb)$ we denote the counting function of $\la_{k}^{\pm}(\Tb)$.  The notation $n(\la,\Tb)$ is used for the counting function of  singular numbers of the (not necessarily self-adjoint) operator $\Tb.$  When the operator is associated with a quadratic form $\Fb$, the notation $n_{\pm}(\la,\Fb)$ etc. is sometimes used.

Our first main result is the following eigenvalue estimate.
\begin{thm}\label{ThEstimate} Let $\Si$ be a compact Lipschitz surface of dimension $d<\Nb$ in $\Om'\subset\R^\Nb$ and $V\in L^{\Psi}(\Si)$.
Then  for the operator $\Tb=\Tb(V,\Si,\AF)$, the estimate
\begin{equation}\label{main estmate}n(\la, \Tb)\le C \|V\|^{(av, \Psi)}_{\Si}\la^{-1}
\end{equation}
holds with a constant $C$ depending on the surface $\Si$ and the operator $\AF$ but independent of the function $V$.
\end{thm}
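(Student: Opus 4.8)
The plan is to write $\Tb=\Tb(V,\Si,\AF)$ in the factorised form $\Tb=\Jb^{*}V\Jb$, where $\Jb\colon L_{2}(\Om)\to L_{2}(\Si,\mu_{\Si})$ is the restriction of $\AF$ to $\Si$, $\Jb u:=(\AF u)|_{\Si}$ (a bounded operator, being the case $V\equiv1$ of the boundedness of $\Fb_{V}$ announced after \eqref{eq1}), and then to transfer the problem through the graph chart of $\Si$ to a \emph{model} estimate on a cube in $\R^{d}$: an eigenvalue bound for an integral operator with a logarithmic kernel and a weight in the Orlicz class $L^{\Psi}$, which is a classical object of the Birman--Solomyak theory of the critical case. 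First I would dispose of the global geometry by a finite atlas of $\Si$ with a subordinate partition of unity $\{\chi_{j}\}$: since $\Fb_{V}=\sum_{j}\Fb_{\chi_{j}V}$, and hence $\Tb(V)=\sum_{j}\Tb(\chi_{j}V)$, the standard subadditivity $n(\la_{1}+\la_{2},\Tb_{1}+\Tb_{2})\le n(\la_{1},\Tb_{1})+n(\la_{2},\Tb_{2})$, together with the monotonicity $\|\chi_{j}V\|_{\Si}^{(av,\Psi)}\le\|V\|_{\Si}^{(av,\Psi)}$ of the averaged norm, reduces \eqref{main estmate} to the case where $\Si$ is a single Lipschitz graph $\yb=\vf(\xb)$ over a cube $Q\subset\R^{d}$ and $V$ is supported in it, at the cost of a constant depending only on the atlas.

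Next I would pass to a positive integral operator. Since $\pm\Fb_{V}\le\Fb_{|V|}$, the variational principle gives $n(\la,\Tb(V))=n_{+}(\la,\Fb_{V})+n_{-}(\la,\Fb_{V})\le 2\,n_{+}(\la,\Fb_{|V|})=2\,n(\la,\Tb(|V|))$, and $\Tb(|V|)=M^{*}M$ with $M:=|V|^{1/2}\Jb$, so $n(\la,\Tb(|V|))=n(\la,MM^{*})=n(\la,\Gb)$, where $\Gb:=|V|^{1/2}\,\Jb\Jb^{*}\,|V|^{1/2}\ge0$ acts on $L_{2}(\Si,\mu_{\Si})$. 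A direct computation gives $\Jb^{*}h=\AF^{*}(h\,\mu_{\Si})$, so $\Jb\Jb^{*}$ is the integral operator on $L_{2}(\Si,\mu_{\Si})$ whose kernel is the restriction to $\Si\times\Si$ of the Schwartz kernel $G(X,X')$ of $\AF\AF^{*}$. Here lies the key point: by hypothesis $\AF$ is, modulo a smoothing term and with compactly supported cut-offs, a pseudodifferential operator of order $-l=-\Nb/2$, hence $\AF\AF^{*}$ is, modulo smoothing, a compactly supported pseudodifferential operator of order $-\Nb$, and such an operator has only a logarithmic diagonal singularity, $|G(X,X')|\le C_{\AF}\bigl(1+\log_{+}\tfrac{1}{|X-X'|}\bigr)$ for $X,X'\in\Om$. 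Thus $\Gb$ is the integral operator with kernel $|V(Z)|^{1/2}G(Z,Z')|V(Z')|^{1/2}$ on $L_{2}(\Si,\mu_{\Si})$. (The boundedness of $\Fb_{V}$ for $V\in L^{\Psi}$, used implicitly above, is part of the same circle of estimates: the displayed kernel bound makes the corresponding integral operator bounded on $L_{2}$ precisely when $V\in L^{\Psi}$.)

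Now I would flatten $\Si$ and invoke the model estimate. Parametrising $\Si$ by the bi-Lipschitz map $\kappa\colon\xb\mapsto(\xb,\vf(\xb))$, one has $c\,|\xb-\xb'|\le|\kappa(\xb)-\kappa(\xb')|\le C\,|\xb-\xb'|$ and $d\mu_{\Si}=\s(\xb)\,d\xb$ with $\s$ bounded above and below by constants depending only on the Lipschitz constant of $\vf$; transporting $\Gb$ by $\kappa$ gives a unitarily equivalent integral operator $\widetilde\Gb$ on $L_{2}(Q,\s\,d\xb)$ with kernel $|\widetilde V(\xb)|^{1/2}\widetilde G(\xb,\xb')|\widetilde V(\xb')|^{1/2}$, where $\widetilde V=V\circ\kappa$ and $|\widetilde G(\xb,\xb')|\le C\bigl(1+\log_{+}\tfrac{1}{|\xb-\xb'|}\bigr)$. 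Up to the fixed equivalence of $L_{2}(Q,\s\,d\xb)$ and $L_{2}(Q)$, this is exactly a weighted integral operator with a logarithmic kernel on a cube in $\R^{d}$ --- equivalently, an operator of the form $(|\widetilde V|^{1/2}\Bb)(|\widetilde V|^{1/2}\Bb)^{*}$ with $\Bb$ a pseudodifferential operator of order $-d/2$, the \emph{critical} order in dimension $d$ --- for which the bound $n(\la,\widetilde\Gb)\le C\,\|\widetilde V\|_{Q}^{(av,\Psi)}\,\la^{-1}$ is known (Birman--Solomyak, with Solomyak's averaged-norm refinement; see \cite{BS}, \cite{Sol94}, and \cite{KarSh} for $d=2$ --- the argument does not depend on $d$). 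Since the bi-Lipschitz change of variables and the two-sided bounds on $\s$ leave the averaged norm invariant up to a constant, $\|\widetilde V\|_{Q}^{(av,\Psi)}\asymp\|V\|_{\Si}^{(av,\Psi)}$, and combining everything yields \eqref{main estmate} with $C$ depending on $\Si$ (through the Lipschitz data and the atlas) and on $\AF$ (through $C_{\AF}$), not on $V$.

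The main obstacle is the step where the genuinely $\Nb$-dimensional operator $\AF$ is reduced to a $d$-dimensional model, i.e.\ the claim that restricting the order-$(-\Nb)$ operator $\AF\AF^{*}$ to the $d$-dimensional set $\Si$ leaves a kernel with only a logarithmic singularity \emph{in the intrinsic geometry of $\Si$}. For a smooth $\Si$ this is immediate from the pseudodifferential calculus (or from the trace theorem $H^{\Nb/2}(\R^{\Nb})\to H^{d/2}(\Si)$), but here $\Si$ is only Lipschitz, so one cannot differentiate along it; the kernel bound for $\AF\AF^{*}$ has to be combined with the elementary comparison $|X-X'|\asymp|\xb-\xb'|$ on the graph, and one must still verify that the non-logarithmic remainder stays bounded and that $V\in L^{\Psi}$ secures the boundedness and the needed compactness of $\Fb_{V}$. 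It is precisely this borderline logarithmic term that forces the Orlicz space $L^{\Psi}$ and the averaged norm into the statement; anything strictly smoother than it --- the smoothing remainder of $\AF$, the off-diagonal part of the kernel, the overlaps in the atlas --- enters only as a lower-order correction.
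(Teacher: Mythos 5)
Your reductions at the start (atlas plus partition of unity, passing to $|V|$, the factorisation $\Tb(|V|)=M^*M$ and the switch to $\Gb=|V|^{1/2}\,\G\AF\AF^*\G^*\,|V|^{1/2}$ on $L_2(\Si,\mu_\Si)$) are sound, and this kernel picture is indeed used in the paper -- but only in Section \ref{7}, for the \emph{asymptotics}, after the weight has been replaced by a smooth one. The genuine gap is your final step: the claim that the bound $n(\la,\widetilde\Gb)\le C\|\widetilde V\|^{(av,\Psi)}_Q\la^{-1}$ for the flattened operator on a cube in $\R^d$ ``is known'' once the kernel obeys the logarithmic bound. The references you invoke (\cite{BS}, \cite{Sol94}, \cite{SZSol}, \cite{KarSh}) prove the Orlicz-weighted estimate for the specific critical-order operators, i.e.\ for the quadratic form $\int V|f|^2$ with $f$ in the critical Sobolev space (equivalently for $|V|^{1/2}(1-\Delta)^{-d/4}$, and in \cite{KarSh} for the ambient form $\int V|u|^2d\mu$ with $u\in H^1(\R^2)$), \emph{not} for an arbitrary positive integral operator specified only through a pointwise kernel bound. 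A bound $|\widetilde G(\xb,\xb')|\le C(1+\log_+|\xb-\xb'|^{-1})$ yields neither the form domination $\widetilde\Gb\le C\,|\widetilde V|^{1/2}(1-\Delta_{\R^d})^{-d/2}|\widetilde V|^{1/2}$ nor singular-value domination: pointwise inequalities between kernels transfer to operator inequalities only if the difference kernel is itself positive definite, which is not established here. What would make your scheme run is precisely the operator inequality $\G\AF\AF^*\G^*\le C(1-\Delta)^{-d/2}$ on $\Si$, i.e.\ a critical-order trace theorem $H^{\Nb/2}(\R^\Nb)\to H^{d/2}(\Si)$; for a Lipschitz surface this is exactly the obstruction you name in your last paragraph (for $d\ge 3$ the target space is not even accessible through Lipschitz charts), and the comparison $|X-X'|\asymp|\xb-\xb'|$ only reinstates the kernel bound, not the operator bound. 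Nor can the non-logarithmic remainder be dismissed as lower order: a merely bounded, non-sign-definite kernel sandwiched between the weights $|V|^{1/2}\in L_2$ is Hilbert--Schmidt, but its singular values need not be $O(k^{-1})$, so at the accuracy $\la^{-1}$ it is not negligible without further structure.

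The paper avoids this issue by never restricting the operator to the surface when proving the estimate. After the substitution $u=(-\Delta_\Dc+1)^{l/2}f$ (and the identity \eqref{GeneralOperator}, which reduces a general $\AF$ to $\AF_0$ by bounded factors), Theorem \ref{thm.eig.ahlf} is obtained from Theorem \ref{BSESt}: the form $\int V|f|^2d\mu$ is estimated on $\overset{\circ}{H}{}^{l}(G)$ by the Birman--Solomyak piecewise-polynomial method adapted to Ahlfors-regular measures -- the Maz'ya-type embedding of $H^{l}$ into the exponential Orlicz class with respect to $\mu$ (Lemmas \ref{meascor}, \ref{Measlem4}), semi-additivity of the averaged norm, a Besicovitch covering by cubes calibrated so that the local norm is $\approx\la$ on each cube, and the geometric Theorem \ref{mm} guaranteeing continuity of $t\mapsto\mu(Q_X(t))$. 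That machinery is precisely what replaces your missing $d$-dimensional ``model theorem''; to salvage your route you would have to prove that model estimate (or the critical Lipschitz trace inequality) yourself, which is essentially the content of the theorem rather than a citable fact.
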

Theorem  \ref{ThEstimate} extends to the case of singular measures  supported on surfaces in $\R^{\Nb}$ the estimates obtained by M.Z. Solomyak in \cite{Sol94} for domains (cubes) in $\R^\Nb$ for an even $\Nb$ and in \cite{SZSol} for an odd $\Nb$. In both cases, the operator $\AF_0=(1-\Delta)^{-\Nb/4}$ played the role of $\AF$. The passage to a more general $\AF$ is easy and is carried out at the end of Section \ref{EigEst}.

Theorem \ref{ThEstimate} follows from a spectral estimate in a more general setting  extending the considerations in \cite{KarSh}.
\begin{defin}\label{AhlforsDef}Let $\mu$ be a positive Radon measure on $\mathbb{R}^{\mathbf{N}}$. We say that it  is $\a$- Ahlfors regular (an $\a$-AR-measure),  $\alpha \in (0, \mathbf{N}]$,
 if there exist positive constants $c_0$ and $c_1$ such that
\begin{equation}\label{Ahlfors}
c_0r^{\alpha} \le \mu(B(X, r)) \le c_1r^{\alpha}\;
\end{equation}
for all $0< r \le \mathrm{diam(supp}\,\mu)$ and all $X\in$ $\mathrm{supp}\, \mu$, where $B(X, r)$ is the ball of radius $r$ centred at $X$ and
the constants $c_0$ and $c_1$ are independent of the balls.
\end{defin}
If  $\mu$ is an $\a$-AR-measure, then it is equivalent to the $\alpha$-dimensional Hausdorff measure on its support (see, e.g., \cite[Lemma 1.2]{Dav}).
The measure $\m_\Si$ for a compact Lipschitz surface of dimension $d$ in $\R^{\Nb}$ is, obviously, $d$-AR.

\begin{thm}\label{thm.eig.ahlf} Let $\mu$ be a compactly supported $\a$-AR measure, $0<\a\le\Nb,$ and $V\in L^{\Psi}(\mu)$. Then for the operator $\Tb(V,\mu,\AF)$,
the estimate
\begin{equation}\label{Gen.Estim} n_{\pm}(\la, \Tb(V,\mu,\AF))\le C(\a,\mu,\AF)\|V\|^{(av, \Psi)}_{\Mb}\la^{-1}
\end{equation}
holds with a constant $C(\a,\mu,\AF)$ depending on  the domain $\Om,$ the measure $\m$, and the operator $\AF$ but not on the  function $V$.
\end{thm}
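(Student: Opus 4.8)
The plan is to reduce the estimate \eqref{Gen.Estim} to a known local result for the model operator $\AF_0=(1-\Delta)^{-\Nb/4}$ on a cube, and then to globalize via a covering-and-summation argument that exploits the Ahlfors regularity of $\m$. First I would recall from Section \ref{Reductions} (referenced in the paper) that the operator $\AF$ differs from $\AF_0$, localized to $\Om'$, by a term of better order, so up to operators in a Schatten class $\mathfrak{S}_p$ with $p<1$ — which contribute nothing to the order $\la^{-1}$ in the counting function via the Ky Fan/Weyl inequalities — it suffices to prove the bound for $\Tb(V,\m,\AF_0)$. This is the step where the hypothesis ``$C$ depends on $\AF$'' is used, and it is the ``easy'' passage the authors flag at the end of Section \ref{EigEst}.

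Next, the heart of the matter: a single-cube estimate. For a cube $Q$ of unit size (or of any fixed size after rescaling), one wants
\[
n_\pm(\la, \Tb(V\chi_Q,\m,\AF_0)) \le C\,\|V\|^{(av,\Psi)}_{Q\cap\Mb}\,\la^{-1},
\]
with $C$ independent of the cube and of $V$. The form $\Fb_V[u]=\int V|\AF_0 u|^2\,d\m$ is estimated by factoring through the embedding $L_2(Q)\xrightarrow{\AF_0} H^{\Nb/2}(Q)\hookrightarrow L_2(\m)$, i.e. one studies the singular numbers of the trace operator $\g_\m\AF_0\colon L_2\to L_2(\Mb,\m)$ weighted by $V^{1/2}$. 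Here the critical Sobolev exponent $\Nb/2$ produces the borderline embedding into an Orlicz (exponential-class) space, which is exactly why the Solomyak averaged norm $\|V\|^{(av,\Psi)}$ with $\Psi(t)=(1+t)\log(1+t)-t$ and its complementary function $\F(t)=e^t-1-t$ appear. I would invoke (or re-derive, following \cite{Sol94,SZSol}) the fact that on a fixed cube the counting function of this weighted trace operator obeys the $\la^{-1}$ bound with constant times $\|V\|^{(av,\Psi)}$; the Ahlfors regularity of $\m$ enters through the boundedness of the trace of $H^{\Nb/2}$ functions on the $\a$-dimensional set $\Mb$, which holds for every $\a\in(0,\Nb]$ precisely because $\Nb/2$ is the critical smoothness.

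Finally, globalization. Cover $\supp\m$ by a bounded-overlap family of dyadic cubes $\{Q_j\}$ of comparable size, write $\Tb=\sum_j \Tb_j$ with $\Tb_j=\Tb(V\chi_{Q_j},\m,\AF_0)$ plus off-diagonal remainder terms, and use the spectral subadditivity $n_\pm(\la_1+\la_2,\Tb_1+\Tb_2)\le n_\pm(\la_1,\Tb_1)+n_\pm(\la_2,\Tb_2)$ together with an $\ell^1$-type distribution of the spectral parameter $\la=\sum_j\la_j$. Applying the single-cube bound to each $\Tb_j$ gives $\sum_j C\,\|V\|^{(av,\Psi)}_{Q_j\cap\Mb}\,\la_j^{-1}$, and the key combinatorial fact — which is where Ahlfors regularity is essential — is that the averaged norms are \emph{superadditive up to a constant}: $\sum_j \m(Q_j\cap\Mb)\,$-weighted averaged norms are controlled by $\|V\|^{(av,\Psi)}_{\Mb}$ because $\sum_j\m(Q_j\cap\Mb)\le C\,\m(\Mb)$ and $\m(B(X,r))\asymp r^\a$ prevents the measure from concentrating. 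Choosing $\la_j$ proportional to $\|V\|^{(av,\Psi)}_{Q_j\cap\Mb}$ optimizes the sum and yields \eqref{Gen.Estim}. The off-diagonal/remainder terms are handled by a standard interpolation or ``almost orthogonality'' argument and fall into a Schatten class of order $<1$, hence are negligible. I expect the main obstacle to be the bookkeeping in this last step — making the superadditivity of the averaged Orlicz norms quantitative and uniform in the cube size, and verifying that the remainder after the partition of unity genuinely has counting function $o(\la^{-1})$ or better; the single-cube estimate itself is essentially \cite{Sol94,SZSol} adapted to a general $\a$-AR measure in place of Lebesgue measure on a cube.
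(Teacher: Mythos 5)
Your proposal has the right functional-analytic ingredients (critical Sobolev--Orlicz trace embedding, Solomyak's averaged norms and their semiadditivity), but two steps do not hold up as stated. First, the reduction to $\AF_0=(1-\Delta)^{-\Nb/4}$: a general $\AF$ of order $-l$ does \emph{not} differ from $\AF_0$ by a lower-order term --- its principal symbol can be arbitrary --- so the claim that the difference lives in a Schatten class $\mathfrak{S}_p$, $p<1$ (which, moreover, would itself require an eigenvalue estimate relative to the singular measure) fails. The paper's reduction is different and simpler: since $\AF$ has order $-l$, the operator $\AF_0^{-1}\AF$ is bounded on $L_2(\Om')$, and one uses the exact factorization $\Tb(|V|,\m,\AF)=(\AF_0^{-1}\AF)^{*}\,\Tb(|V|,\m,\AF_0)\,(\AF_0^{-1}\AF)$ as in \eqref{GeneralOperator}, under which an order-sharp estimate is preserved; no lower-order or Schatten argument is needed.

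Second, and more seriously, the core of your argument --- the single-cube counting bound $n_\pm(\la,\Tb(V\chi_Q,\m,\AF_0))\le C\|V\|^{(av,\Psi,\m)}_{Q}\la^{-1}$ for a general $\a$-AR measure --- is exactly the substance of the theorem, and it is not contained in \cite{Sol94}, \cite{SZSol}: those papers treat the Lebesgue measure on a cube. Proving it for a singular AR measure is what Theorem \ref{BSESt} does, via the variational principle and a $\la$-adapted Besicovitch covering by cubes of \emph{variable} size, with three ingredients your sketch does not supply: (i) the form bound of Lemma \ref{Measlem4}, i.e.\ $\int_{\overline Q}V|f|^2d\m\le A_2\|V\|^{(av,\Psi,\m)}_{\overline Q}\|f\|^2_{H^{l}_{\mathrm{hom}}(Q)}$ for $f$ orthogonal to low-degree polynomials, resting on the Maz'ya-type trace embedding into the exponential Orlicz class (this is where Ahlfors regularity actually enters, uniformly in the cube size); (ii) the upper semiadditivity of the averaged norm, used to bound the \emph{number} of cubes in the covering by $C\la^{-1}\|V\|^{(av,\Psi,\m)}$; and (iii) the geometric Theorem \ref{mm}, which provides a cube orientation whose faces have zero $\m$-measure, so that $t\mapsto\|V\|^{(av,\Psi,\m)}_{Q_X(t)}$ is continuous and the stopping size $t(X)$ with prescribed local norm exists --- a point specific to general measures with no analogue in the Lebesgue case. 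Without some substitute for this machinery, your ``invoke or re-derive following \cite{Sol94,SZSol}'' is circular: the per-cube bound you then sum is the theorem itself localized to a cube. (Minor remarks: once such a per-cube bound were available, your globalization is essentially trivial, since the form is linear in the measure $V\,d\m$, so the decomposition $\Tb=\sum_j\Tb(V\chi_{Q_j},\m,\AF_0)$ over disjoint cubes is exact and there are no off-diagonal remainders to control; and the Ky Fan splitting with finitely many fixed-size cubes only costs a constant.)
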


To formulate the result on the eigenvalue asymptotics, we need more notation.
According to the Rademacher theorem, the function $\vf$ is differentiable  $\m_\Si$-almost everywhere. At such `regular' points $X_0$, the tangent $d$-dimensional plane $T_{X_0}\Si$  and the normal $\dF$-dimensional plane $N_{X_0}\Si$ exist. The principal symbol $a_{-l}(X,\X)$ of the operator $\AF$ can be expressed in a neighbourhood of such a point in the coordinates
$(\xb,\yb;\x,\y)$, with $\xb\in T_{X_0}\Si$, $\yb\in N_{X_0}\Si$ and the corresponding co-variables $\x,\y$; we denote it again $a_{-l}(X_0;\x,\y)$. In this notation, we set
\begin{equation}\label{eq4}
    r_{-d}(X_0,\x)= (2\pi)^{-\dF}\int_{N_{X_0}\Si}|a_{-l}(X_0;\x,\y)|^2d\y, \quad \x\in T^*_{X_0}\Si.
\end{equation}
This function is defined almost everywhere on  $T^*\Si$ and is order $-d$ homogeneous in $\x$. Now we formulate our result on eigenvalue asymptotics.

\begin{thm}\label{ThmAs}Under the  conditions of Theorem \ref{ThEstimate}, the counting function for the eigenvalues of the operator $\Tb$ has the following asymptotics
\begin{equation}\label{eq5}
    n_{\pm}(\la,\Tb(V,\Si))\sim \la^{-1}C_{\pm}(V,\Si, \AF),\, C_{\pm}=d^{-1}(2\pi)^{-d}\int_{S^*\Si}V_{\pm}(X)r_{-d}(X,\x) d\m_{\Si}(X)d\x,
\end{equation}
where the integration is performed over the cosphere bundle of $\Si$ and $V_{\pm}(X)$ denotes the positive, respectively, negative part of the function $V$.
\end{thm}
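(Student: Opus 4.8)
The plan is to reduce the eigenvalue asymptotics of $\Tb=\Tb(V,\Si,\AF)$ to the critical-case Weyl asymptotics of a \emph{weighted pseudodifferential operator of order $-d$ on the $d$-dimensional surface $\Si$}, with weight $V$ and principal symbol $r_{-d}$, and then to match the leading coefficients. First I would rewrite the quadratic form \eqref{eq1} with $\m=\m_\Si$ as $\Fb_V[u]=\langle V\g_\Si(\AF u),\g_\Si(\AF u)\rangle_{L_2(\Si,\m_\Si)}$, where $\g_\Si$ denotes restriction to $\Si$; by the boundedness statement established before Theorem \ref{ThEstimate}, $C:=\g_\Si\AF$ extends to a bounded operator $L_2(\Om)\to L_2(\Si,\m_\Si)$, and $\Tb=C^*VC$. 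Setting $\Lambda_0:=CC^*=\g_\Si(\AF\AF^*)\g_\Si^*\ge0$ (a compact operator, since $n(\la,\Tb(\1))<\infty$ by Theorem \ref{ThEstimate}), two applications of the $AB$/$BA$ principle — once with $(A,B)=(C^*,VC)$ and once with $(A,B)=(\Lambda_0^{1/2},V\Lambda_0^{1/2})$ — show that the non-zero spectrum of $\Tb$, with signs and algebraic multiplicities, coincides with that of the self-adjoint operator $\Lambda_0^{1/2}V\Lambda_0^{1/2}$, so
\begin{equation*}
  n_\pm(\la,\Tb)=n_\pm\bigl(\la,\ \Lambda_0^{1/2}V\Lambda_0^{1/2}\bigr),\qquad \Lambda_0=\g_\Si(\AF\AF^*)\g_\Si^*,
\end{equation*}
with the right-hand operator acting in $L_2(\Si,\m_\Si)$.

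Next I would identify $\Lambda_0$ as an order $-d$ operator on $\Si$. The operator $\AF\AF^*$ is pseudodifferential of order $-\Nb$ with principal symbol $|a_{-l}(X,\Xi)|^2$, so its Schwartz kernel has only a logarithmic singularity on the diagonal of $\R^\Nb$; restricting both arguments to the $d$-dimensional $\Si$ again leaves a logarithmic singularity, which on $\Si$ is precisely that of an order $-d$ operator — the critical case on $\Si$. To compute its symbol at a point $X_0$ where $\vf$ is differentiable (Rademacher), I would flatten $\Si$ near $X_0$ onto $T_{X_0}\Si$, split the covariable as $\Xi=(\x,\y)$ with $\x\in T^*_{X_0}\Si$, $\y\in N^*_{X_0}\Si$, and integrate the symbol of $\AF\AF^*$ over $\y$; this yields, modulo a less singular remainder, the operator on $T_{X_0}\Si\cong\R^d$ with symbol
\begin{equation*}
  (2\pi)^{-\dF}\int_{N_{X_0}\Si}|a_{-l}(X_0;\x,\y)|^2\,d\y=r_{-d}(X_0,\x),
\end{equation*}
i.e. exactly $r_{-d}$ from \eqref{eq4}, homogeneous of order $-d$ in $\x$. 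I would then invoke a surface version of the Birman--Solomyak critical-case asymptotics for weighted operators of order $-\dim$ on a $d$-dimensional manifold (cf.\ \cite{BS}, \cite{Sol94}, \cite{SZSol}): for $V$ bounded and supported in one chart,
\begin{equation*}
  n_\pm\bigl(\la,\ \Lambda_0^{1/2}V\Lambda_0^{1/2}\bigr)\sim\la^{-1}d^{-1}(2\pi)^{-d}\int_{S^*\Si}V_\pm(X)\,r_{-d}(X,\x)\,d\m_\Si(X)\,d\x,
\end{equation*}
which is \eqref{eq5}.

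To reach the full statement, several routine reductions remain. I would pass from bounded $V$ to $V\in L^\Psi(\Si)$ by approximation: applying the a priori estimate \eqref{main estmate} to $V-V^{(n)}$ gives $\limsup_{\la\to+0}\la\,\bigl|n_\pm(\la,\Tb(V))-n_\pm(\la,\Tb(V^{(n)}))\bigr|\le C\,\|V-V^{(n)}\|^{(av,\Psi)}_\Si\to0$, while $C_\pm(\cdot,\Si,\AF)$ is continuous on $L^\Psi$ for the same reason, so the asymptotics survives the limit. A preliminary reduction to $V\ge0$ follows from Theorem \ref{ThEstimate} applied to the opposite part of $V$ together with the additivity of $C_\pm$ over the disjoint supports of $V_\pm$. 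The coordinate charts are glued via a partition of unity and the Birman--Solomyak counting-function inequalities $n_\pm(\la_1+\la_2,\Tb_1+\Tb_2)\le n_\pm(\la_1,\Tb_1)+n_\pm(\la_2,\Tb_2)$, the cross terms being $o(\la^{-1})$ by asymptotic orthogonality of pieces with essentially disjoint supports; finally the set where $\vf$ fails to be differentiable carries no contribution since its $\m_\Si$-measure vanishes, and a dominated-convergence argument over the set of regular points of $\vf$ shows that the integral in \eqref{eq5} is well defined.

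The hard part will be the interplay of the two sources of non-smoothness. Since $\Si$ is only Lipschitz, there is no genuine pseudodifferential calculus on it, so one must show that replacing $\Lambda_0$ on each small chart by the flat model with the frozen symbol $r_{-d}(X_0,\cdot)$, and simultaneously replacing $\vf$ by its (a.e.\ existing) linearization, alters $n_\pm(\la,\cdot)$ only by $o(\la^{-1})$. This is precisely where Theorems \ref{ThEstimate}/\ref{thm.eig.ahlf} are used again: on a sufficiently fine partition the discrepancy behaves like an $L^\Psi$-weight with small averaged norm, so by \eqref{Gen.Estim} its contribution to the counting function is correspondingly small, and the leading asymptotics is unaffected. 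The same averaged-norm mechanism underlies the asymptotic orthogonality and the additivity of $C_\pm$ invoked above, so that the whole argument rests on the estimates established in the first part of the paper.
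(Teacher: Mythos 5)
Your reduction is, up to notation, the one the paper uses: for (a smooth approximant of) $V$ one writes $\Tb=(\,\cdot\,)^*V(\,\cdot\,)$ with $\G\AF$ and passes, by the nonzero-spectrum identity, to the operator $\Lb=\Lambda_0^{1/2}V\Lambda_0^{1/2}$ (in the paper, $W\G\AF\AF^*\G^*W$ with $V=W^2\ge0$) acting in $L_2(\Si,\m_\Si)$, whose kernel has the logarithmic diagonal singularity coming from the order $-\Nb$ operator $\AF\AF^*$ with principal symbol $|a_{-l}|^2$; the approximation of $V$ in the averaged norm, the use of the Birman--Solomyak asymptotic perturbation lemma, and the separation of signs also mirror the paper (note, though, that discarding $V_\mp$ cannot be done by Theorem \ref{ThEstimate} alone: it needs a positive distance between the positivity and negativity regions, which is exactly why the paper inserts Lemma \ref{Lem.Appr.2} and Corollary \ref{cor.plusminus}).

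The genuine gap is at the decisive step: the Weyl asymptotics for the resulting log-kernel (order $-d$) operator on the \emph{Lipschitz} surface $\Si$. The paper does not prove this; it quotes Theorem 6.4 of \cite{RT} (see also \cite{RT1}), a dedicated result on weakly polar integral operators on Lipschitz surfaces of arbitrary codimension, whose proof rests on approximating the Lipschitz surface by smooth ones and on operator-convergence arguments via the perturbation lemma. Your substitute --- freeze the symbol at points of differentiability of $\vf$, linearize the surface on a fine partition, and claim that ``the discrepancy behaves like an $L^\Psi$-weight with small averaged norm'' so that \eqref{Gen.Estim} controls it --- does not work as stated. Replacing $\vf$ by its tangent plane and $r_{-d}(X,\x)$ by a frozen symbol perturbs the surface measure $\m_\Si$, the restriction map $\G$ and the kernel of $\Lambda_0$, not the weight $V$; the error operator is therefore not of the form $\Tb(V',\m,\AF)$ with a fixed measure and operator, and Theorem \ref{thm.eig.ahlf} gives no bound on its counting function. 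Moreover, the ``surface version of the Birman--Solomyak critical-case asymptotics'' you invoke for the flat model is not contained in \cite{BS}, \cite{Sol94}, \cite{SZSol}, which concern domains (cubes, tori, $\R^\Nb$) with Lebesgue measure; for a Lipschitz graph this is precisely the nontrivial content of \cite{RT}. So either cite \cite{RT} at this point, as the paper does, or supply a full proof of the asymptotics for log-kernel operators on Lipschitz graphs --- the sketch given does not yet amount to one.
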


If several Lipschitz surfaces, of possibly different dimensions, are present and the measure $P$ has a possibly nontrivial absolutely continuous part, the above eigenvalue asymptotics still holds, with the coefficient being the sum of the coefficients calculated for all components of the measure.

  An important particular case of our general considerations concerns  $\AF$ being an appropriate negative power of the Laplace operator with some boundary conditions.
  More precisely, $\AF_0=(-\Delta)^{-\Nb/4}+S$,  where $S$ is an operator smoothing inside $\Om$ (typically, a singular Green operator.) In this case, the eigenvalue problem for the operator $\Tb$ can be reduced, up to negligible terms (which we disregard throughout this section), to the weighted polyharmonic eigenvalue problem
  \begin{equation}\label{Diff}
    \la(-\Delta)^l f=Pf
  \end{equation}
  understood in the distributional sense:
  \begin{equation}\label{Diff1}
    \la \int_{\Om} f (-\Delta)^l h dX=\int_{\Om} fhP, \quad h\in \Dc(\Om),
  \end{equation}
  or, for $P=V\mu_{\Si}$,
   \begin{equation}\label{Diff11}
    \la \int_{\Om} f (-\Delta)^l h dX= \int_{\Si}fhV d\mu_{\Si}, \quad h\in \Dc(\Om),
  \end{equation}
with some boundary conditions understood, again, in the distributional sense.
 If the geometry of $\Si$ is sufficiently `nice', the  spectral problem \eqref{Diff1} or \eqref{Diff11} can be expressed  more explicitly.
For example, if $\Nb=2,\, l=1$, $P=V\m_\Si$,  and $\Si$ is a Lipschitz curve \emph{inside} $\Om$, we arrive (see, e.g., \cite{Agr}) at the \emph{transmission (conjugation)} problem
  \begin{equation}\label{Diff2}
    -\Delta f=0 \ \mbox{ outside } \Si,\quad \la[f_n](X)=V(X)f(X) \mbox{ on } \Si,\qquad f\in H^1(\Om),
  \end{equation}
 where $[f_n]$ is the jump of the normal derivative $f_n$  at $\Si$.
Note that if $\Si$ \emph{is} the boundary of $\Om$, we obtain a Steklov type problem, associated with  the Neumann-to-Dirichlet operator,
\begin{equation}\label{Diff3}
    -\Delta f=0 \ \mbox{ for }X\in\Om,\quad \la f_n(X)=V(X)f(X)\ \mbox{ on } \ \Si .
  \end{equation}
This case is \emph{not} covered by the reasoning in this paper as the surface $\Si$ is \emph{not} contained in $\Om$. We stress here that the question on the eigenvalue asymptotics for the Steklov problem with Lipschitz boundary is still open, even in the two-dimensional case, while for the transmission problem with a nice weight on a Lipschitz surface the eigenvalue asymptotics was established in \cite{RT}.

Let now, still for $\Nb=2$, $\Si\subset\Om$, the measure $P$ have both absolutely continuous and singular parts, $P=V_0 dX+V_1 \m_\Si$ for a Lipschitz curve $\Si\subset\Om$, with $V_0\in L^\Psi(\Om)$ and $V_1\in L^\Psi(\Si)$. The eigenvalue problem \eqref{Diff1} takes the form
\begin{equation}\label{Diff4}
-\la\Delta f(X)=V_0(X)f(X) \ \mbox{ for }\ X\in \Om\setminus\Si, \quad  \la [f_n](X)=V_1(X)f(X) \ \mbox{ on } \ \Si.
\end{equation}
So, here the spectral parameter is present both in the differential equation and the transmission condition. Our results show that they both contribute to the leading term in the eigenvalue asymptotics. \emph{Boundary} problems of this type have been considered by A. Kozhevnikov, see \cite{Kozh}.

Let us now pass to the case $\Nb=4,$ $l=2$. Here  we have the following  choice for the dimension $d$ of $\Si:$ $d=1,2,$ or $3$. For $d=3$, $\dF=1$, we arrive again at a transmission problem similar to \eqref{Diff2}:
\begin{align}\label{Diff5}
\Delta^2 f (X)=0 \ \mbox{ for } \ X\in \Om\setminus\Si,\quad \la[(-\Delta f)_n](X)=V(X)f(X) \ \mbox{ on } \ \Si, \\
 f\in H^2(\Om). \nonumber
\end{align}
Next, for $d=2,\,\dF=2$, let $X=(\xb,\yb)\in \Om\subset\R^4$, $\xb,\yb\in\R^2$, let $\Si=\{X:\yb=0\}\cap\Om$ be (a piece of) the two-dimensional plane in $\R^4$,
and let $V=V(\xb)$. Then, after some simple calculations, we arrive at the following problem
\begin{equation}\label{Diff6}
\Delta^2f(X)=0 \ \mbox{ for } \ X\in \Om\setminus\Si, \quad \la\lim_{\de\to 0}\int_{|\yb|=\de}(-\Delta f)_n(\xb, \yb)ds(\yb)=V(\xb)f(\xb,0) ,
\end{equation}
where, for  each fixed $\xb$, the integral of the normal derivative of the Laplacian of $f$
is taken over the $\de$-circle in the $\yb$-plane.
Finally, for $d=1$, when the manifold $\Si$ is the line $\yb=0$ in the coordinates $X=(\xb,\yb), \ \xb\in\R^1,\, \yb\in\R^3$\,
in $\R^4$, the resulting problem is
 \begin{equation}\label{Diff7}
\Delta^2f(X)=0 \  \mbox{ for } \ X\in \Om\setminus\Si, \quad \la\lim_{\de\to 0}\int_{|\yb|=\de} (-\Delta f)_n(\xb, \yb)ds(\yb)=V(\xb)f(\xb,0),
\end{equation}
where, for  each fixed $\xb$, the integration is performed over the 2D sphere $|\yb|=\de$. One can interpret the conditions on the surface $\Si$ in \eqref{Diff6}, \eqref{Diff7} as
multi-dimensional versions of the transmission conditions in \eqref{Diff2}, \eqref{Diff5}. Our results show that all these spectral problems have the same order of the eigenvalue asymptotics.

If the measure $P$ has a nonzero absolutely continuous part with density $V_0$, one should replace  in \eqref{Diff5}-\eqref{Diff7} the equation
$\Delta^2f(X)=0$  with $\la\Delta^2f(X)=V_0(X)f(X)$, so, again, we arrive at spectral problems containing the spectral parameter both in the equation and in the transmission conditions.

As we already mentioned, if the support of the singular measure  consists of several disjoint surfaces, of possibly different dimensions, their contributions to the eigenvalue estimates have the same order and the coefficients in the asymptotics add up.

\begin{cor}\label{general.measure} Let a measure $P$  with a compact support in $\Om\subset \R^{\Nb}$ have the absolutely continuous part
$P_{ac}=V_0(X)dX,$ $V_0\in L^{\Psi}(\Om,dX)$ and the singular part $P_{sing}=\sum P_j$, where $P_j=V_j\m_{\Si_j}$, $\Si_j$ are disjoint Lipschitz surfaces of dimension $d_j<\Nb$,
$\m_{\Si_j}$ are the measures induced by the embeddings of $\Si_j$ into $\R^{\Nb}$,  and $V_j\in L^{\Psi}(\Si_j,\mu_{\Si_{j}})$. Then for the operator in $L_2(\Om)$ defined by the quadratic form $\Fb_{P}[u]=\int_\Om|(\AF u)(X)|^2 P$, the following asymptotic formula holds
\begin{equation}\label{As.several}
n_{\pm}(\la,\Fb )\sim \la^{-1}C(P,\AF), \,\la \to 0;
\end{equation}
here $C(P,\AF)$ is the sum of the asymptotic coefficients  in \eqref{eq5} corresponding to all surfaces $\Si_j$, plus the term coming from $P_{ac}$,\ $C(P,\AF)=C(P_{ac},\AF)+\sum C(V_j,\Si_j)$,  where
\begin{equation}\label{AC.coeff}
C(P_{ac},\AF)=\Nb^{-1}(2\pi)^{-\Nb}\int_{S^*\Om}V_{0,{\pm}}(X)|a_{-l}(X,\Xi)|^2 dX d\Xi.
\end{equation}
\end{cor}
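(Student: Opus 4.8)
The plan is to reduce the statement to the asymptotics already available for the individual components of $P$ and then to add these up. Since $P=P_{ac}+\sum_jP_j$ and the assignment $W\mapsto\Tb(W,\cdot,\AF)$ is additive in the measure, we have $\Fb_P[u]=\Fb_{P_{ac}}[u]+\sum_j\Fb_{P_j}[u]$, that is, $\Tb(P)=\Tb(V_0,dX,\AF)+\sum_j\Tb(V_j,\Si_j,\AF)$. For each surface term, Theorem~\ref{ThmAs} gives $n_\pm(\la,\Tb(V_j,\Si_j))\sim\la^{-1}C(V_j,\Si_j)$ with $C(V_j,\Si_j)$ as in \eqref{eq5} (with $\Si$, $V$, $d$, $r_{-d}$ replaced by $\Si_j$, $V_j$, $d_j$, $r_{-d_j}$). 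For the absolutely continuous term one invokes the critical--order ($2l=\Nb$) Birman--Solomyak asymptotics for weights in the Orlicz class $L^\Psi$ (see \cite{BS}, and \cite{Sol94}, \cite{SZSol} for the model operator $\AF_0=(1-\D)^{-\Nb/4}$; the passage to a general $\AF$ being the reduction carried out at the end of Section~\ref{EigEst}), which yield $n_\pm(\la,\Tb(V_0,dX))\sim\la^{-1}C(P_{ac},\AF)$ with the coefficient \eqref{AC.coeff}.

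It then remains to show that these contributions are \emph{asymptotically orthogonal}, so that both their orders and their leading coefficients add up. The tools are the classical inequalities $n_\pm(\la_1+\la_2,A+B)\le n_\pm(\la_1,A)+n_\pm(\la_2,B)$ for compact self-adjoint $A,B$, in combination with their reversed form (from $A=(A+B)+(-B)$), and the uniform estimate of Theorems~\ref{ThEstimate}--\ref{thm.eig.ahlf} and its absolutely continuous analogue, which bound $\limsup_{\la\to0}\la\,n_\pm(\la,\Tb(W,\m))$ by $\const\,\|W\|^{(av,\Psi)}_{\supp W}$, a quantity that vanishes as $W$ is restricted to sets of shrinking Orlicz content. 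First one localizes, by a smooth partition of unity on $\Om$ subordinate to a fine cover of $\supp P$, with the members near distinct $\Si_j$ taken to have disjoint supports, exactly as in the proof of Theorem~\ref{ThmAs}. Since $\AF$ is pseudodifferential modulo a smoothing term, $\AF\AF^*$ is pseudodifferential of order $-\Nb$, and by pseudolocality the cross terms between pieces with disjoint supports are smoothing, hence spectrally negligible ($o(\la^{-1})$); so the counting functions of the localized pieces add asymptotically, and the coefficients \eqref{eq5}, \eqref{AC.coeff} are themselves additive over the partition. On a piece meeting $\Si_j$, besides $V_j\mu_{\Si_j}$ there is also the absolutely continuous mass $V_0\1_{T_\de}\,dX$ of a thin tube $T_\de$ of width $\de$ around $\bigcup_j\Si_j$; treating the latter as the operator $B$ in the above inequalities -- its coefficient being $\le\const\,\|V_0\|^{(av,\Psi)}_{T_\de}=:\ve(\de)$ by Theorems~\ref{ThEstimate}--\ref{thm.eig.ahlf} -- shows that the contribution of these pieces differs from $\sum_jC(V_j,\Si_j)$ by $O(\sqrt{\ve(\de)})$, while the pieces disjoint from $\bigcup_j\Si_j$ contribute $C(P_{ac},\AF)-O(\ve(\de))$. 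Since $V_0\in L^\Psi(\Om)$ and $\meas T_\de\to0$ as $\de\to0$, one has $\ve(\de)\to0$; letting $\de\to0$ yields $n_\pm(\la,\Fb_P)\sim\la^{-1}\big(C(P_{ac},\AF)+\sum_jC(V_j,\Si_j)\big)$, i.e.\ \eqref{As.several}.

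I expect the only delicate point to be this disentangling of the components, and two mechanisms make it work. First, pseudolocality of $\AF$ turns the interaction between spatially separated localized pieces into a smoothing, hence spectrally negligible, operator; this in particular accounts for the mutual non-interaction of the disjoint surfaces $\Si_j$ and of the bulk. Second, the uniform eigenvalue bound of Theorems~\ref{ThEstimate}--\ref{thm.eig.ahlf} forces the absolutely continuous mass contained in a shrinking tube around a singular surface to carry an arbitrarily small asymptotic coefficient; this is what allows $P_{ac}$ and $P_{sing}$ to be separated even though their supports may be geometrically intertwined. The remaining ingredients -- additivity of the coefficients \eqref{eq5}, \eqref{AC.coeff} over partitions of $\Si_j$, respectively of $\Om$, and the absolute continuity of the averaged Orlicz norm over shrinking sets -- are routine.
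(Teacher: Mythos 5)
Your proposal is correct and follows essentially the same route as the paper's own proof (Theorem \ref{Thm.as.several}): cut a thin tube around the singular surfaces out of the absolutely continuous part, note that the removed mass has small averaged $\Psi$-norm so Theorem \ref{ThEstimate} and the Birman--Solomyak perturbation argument make its contribution negligible, and then add the asymptotics of the now spatially separated components, the cross terms being killed by pseudolocality. The only cosmetic difference is that you re-derive the separated-support additivity via a partition of unity and Ky Fan inequalities instead of simply invoking Lemma \ref{LemLocalization}, which packages exactly that argument.
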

The disjointness conditions above can be considerably relaxed, see Sect. \ref{7}.

\section{Some reductions}\label{Reductions} This section contains some technical observations that are used further on in the paper to reduce general eigenvalue estimates
and asymptotics to more convenient setting. Similar arguments for spectral problems for  differential operators are a well known part of mathematical folklore. They have been used systematically, starting with the papers by M.Sh. Birman and M.Z. Solomyak, for the past 50 years. They are usually  proved in a line or two. Our pseudodifferential versions require some additional, somewhat technical, reasoning, which nevertheless follows the classical pattern, and the results are quite natural. Readers familiar with the classical versions and not interested in these details can skip this section without detriment to understanding the rest of the paper. For brevity, we take a surface of dimension $d=\Nb$ to mean a domain in $\R^{\Nb}$. For an operator $\Tb$, we  denote by $\Db^{\pm}(\Tb)$ the quantity
\begin{equation*}
    \Db^{\pm}(\Tb) :=\limsup_{\la\to 0} \la n_{\pm}(\la,\Tb)=\limsup_{k\to\infty} \pm k\la_k^{\pm}(\Tb).
\end{equation*}

\textbf{Observation 1:} Localization 1. It is sufficient to prove the eigenvalue estimate
 for a surface being described by only one coordinate neighbourhood. Indeed,  if $\Si$ is split into a finite number $J$ of disjoint surfaces $\Si_j, j=1,\dots,J,$ of possibly different dimensions,
 and $P_j=V_j\m_{\Si_j}$ is the restriction of $P$ to $\Si_j$, then $\Db^{\pm}(\Tb(V,\Si,\AF))\le J\sum_j \Db^{\pm}(\Tb(V_j,\Si_j,\AF))$. This property follows from the Ky Fan inequality for the sum of operators,
$\Tb(V,\Si,\AF)=\sum_j\Tb(V_j,\Si_j,\AF)$.

\textbf{Observation 2:}  {Lower order terms.} Let $\BFB$ be a pseudodifferential operator of order $\be<-l$. Then  $\Db^{\pm}(\Tb(V,\Si,\BFB))=0$. This follows from the (already mentioned) result in \cite{BS} that a finite singular measure gives a lower order contribution  to the  eigenvalue distribution for higher order problems.

\textbf{Observation 3:} {Perturbations by lower order terms.}\label{pert.lower} Let $\BFB$ be as above. Then
\begin{equation}\label{add.pertur}
     \Db^{\pm}(\Tb(V,\Si,\AF+\BFB))= \Db^{\pm}(\Tb(V,\Si,\AF)).
\end{equation}
This follows from the inequality
$$
(1-\e)|a|^2 + \left(1-\frac{1}{\e}\right) |b|^2 \le|a+b|^2\le (1+\e)|a|^2 + \left(1+\frac{1}{\e}\right) |b|^2
$$
with $a=(\AF u)(X),\, b=(\BFB u)(X)$ and  the previous observation.

\textbf{Observation 4:} {Localization 2.}\label{loc1} Let $\Om_1$ be  an open subset in $\Om$ such that $\Si\cap\overline{\Om_1}=\varnothing$,
and let $V\in L^{\Psi}(\Si)$. Then for  the eigenvalues of the  operator $\Tb(V,\Om_1)$ on $L_2(\Om_1)$ defined by the form $\Fb_{V,\Om_1}[u]=\int_\Si V(X)|(\AF u)(X)|^2d\m,$ $u\in L_2(\Om_1)$, one has the following estimate $n_{\pm}(\la, \Tb(V,\Om_1))=o(\la^{-1})$ as $\la\to 0.$

\emph{Proof.} Let $\hi \in C_0^\infty$ be a smooth function equal to 0 in a neighborhood of $\overline\Om_1$ and to $1$ in a neighborhood of $\Si$. Then
\begin{equation}\label{cutoff} \Fb_{V,\Om_1}[u]=\int_\Si V(X)|(\hi(X)\AF u)(X)|^2d\m=\int_\Si V(X)|([\hi,\AF]u)(X)|^2d\m.
\end{equation}
The commutator  $[\hi,\AF ]$ is a pseudodifferential operator  of order $-l-1$, and therefore, by Observation 2, the  eigenvalues  of the corresponding operator $\Tb(V,\Om_1)$ decay faster than $k^{-1}$.

It follows, in particular, that the eigenvalue counting function gets a lower order perturbation if we perturb the operator $\AF$ outside a neighborhood of the surface $\Si$. In particular, this gives us freedom in choosing cut-off functions away from $\Si$ or adding operators smoothing away from the boundary -- the possibility already mentioned.

The next, more complicated, statement is used in the study of eigenvalue asymptotics. It says that if two measures have supports separated by a positive distance, then, up to a lower order term, the counting functions behave additively with respect to the measures. This includes the important case when  absolutely continuous measures are present. If both measures are absolutely continuous, this is a classical fact.

\begin{lem}{(Localization 3)}\label{LemLocalization} Let $P=P_1+P_2$, where $P_j=V_j \m_{\Si_j}$ is a measure supported on a compact Lipschitz surface $\Si_j$ of dimension $d_j\in[1,\Nb]$, $j = 1, 2$ (the cases $d_1=\Nb$ and $ d_2=\Nb$ correspond to $\Si_1 $, respectively $\Si_2$, being  domains in $\Om\subset\R^{\Nb}$ and the  measures being absolutely continuous with respect to the Lebesgue measure). Suppose that $\dist(\supp P_1, \supp P_2)>0$. Then
\begin{equation}\label{Localization}
    n_{\pm}(\la,\Tb(P_1+P_2))=n_{\pm}(\la, \Tb(P_1))+n_{\pm}(\la, \Tb(P_2))+o(\la^{-1})\ \mbox{ as }\ \la\to 0.
\end{equation}
\end{lem}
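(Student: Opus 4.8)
The plan is to localise the problem with smooth cut-offs so that, after discarding asymptotically negligible (lower-order) corrections, the two operators acquire \emph{orthogonal ranges}; for operators with orthogonal ranges the counting functions are additive identically in $\la$, and all that remains is to check that the corrections are genuinely $o(\la^{-1})$.

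\emph{Cut-offs and the structure of the corrections.} Since $\dist(\supp P_1,\supp P_2)=2\de_0>0$ and each $\supp P_j$ is compact in $\Om$, I would choose $\chi_j\in\ccs(\Om)$ equal to $1$ on the $\de_0/2$-neighbourhood of $\supp P_j$ and supported in the $\de_0$-neighbourhood of $\supp P_j$, so that $\chi_1\chi_2\equiv0$. As $\chi_j\equiv1$ on $\supp P_j$ and $V_j=0$ $\m_{\Si_j}$-a.e.\ off $\supp P_j$, the form \eqref{eq1} defining $\Tb(P_j)$ coincides with $\int_{\Si_j}V_j|\chi_j\,\AF u|^2\,d\m_{\Si_j}$, exactly as in \eqref{cutoff}. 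Writing $\chi_j\,\AF u=\AF(\chi_j u)+[\chi_j,\AF]u$ and expanding the square gives $\Tb(P_j)=\Tb^0_j+\Sc_j$, where $\Tb^0_j:=\chi_j\,\Tb(V_j,\Si_j,\AF)\,\chi_j$ (equivalently, $\Tb(V_j,\Si_j,\AF)$ with $\AF$ precomposed with multiplication by $\chi_j$), and the self-adjoint remainder $\Sc_j$ has a quadratic form built from terms each carrying a factor of the commutator $[\chi_j,\AF]$, a pseudodifferential operator of order $-l-1<-l$. By the elementary inequality in Observation 3 (with $a=\AF(\chi_j u)$, $b=[\chi_j,\AF]u$) together with Cauchy--Schwarz, for every $\ve\in(0,1)$ one gets $\pm\Sc_j\le\ve\,G_j+C_\ve\,K_j$ with $G_j:=\chi_j\,\Tb(|V_j|,\Si_j,\AF)\,\chi_j\ge0$ and $K_j:=\Tb(|V_j|,\Si_j,[\chi_j,\AF])\ge0$; here $G_j$ is a \emph{fixed} operator obeying $n_+(\la,G_j)\le C\|V_j\|^{(av,\Psi)}_{\Si_j}\la^{-1}$ by Theorem \ref{ThEstimate}, while $\Db^\pm(K_j)=0$, i.e.\ $n_\pm(\la,K_j)=o(\la^{-1})$, by Observation 2.

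\emph{Orthogonality and conclusion.} Because $\chi_1\chi_2\equiv0$ we have $\Tb^0_1\Tb^0_2=\Tb^0_2\Tb^0_1=0$, hence $\Ran\Tb^0_1\perp\Ran\Tb^0_2$, and therefore
\begin{equation*}
n_\pm(\la,\Tb^0_1+\Tb^0_2)=n_\pm(\la,\Tb^0_1)+n_\pm(\la,\Tb^0_2)\qquad\text{for all }\la>0 .
\end{equation*}
Since $\Tb(P_1+P_2)=\Tb(P_1)+\Tb(P_2)=(\Tb^0_1+\Tb^0_2)+(\Sc_1+\Sc_2)$ and the perturbation $\Sc_1+\Sc_2$ is bounded by $\ve(G_1+G_2)$ plus an operator of vanishing $\Db^\pm$, the Birman--Solomyak perturbation principle (the refinement of Observations 2--3: use the two-sided Ky Fan inequality, split off the $\ve G$-part, whose $\la$-multiplied counting function is $O(\ve)$ by Theorem \ref{ThEstimate}, and the $K$-part, which is $o(\la^{-1})$, then let $\la\to0$ and $\ve\to0$) yields $n_\pm(\la,\Tb(P_1+P_2))=n_\pm(\la,\Tb^0_1+\Tb^0_2)+o(\la^{-1})$; applied separately to each $\Tb(P_j)$ versus $\Tb^0_j$ it gives $n_\pm(\la,\Tb(P_j))=n_\pm(\la,\Tb^0_j)+o(\la^{-1})$. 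Combining these three relations with the displayed identity proves \eqref{Localization}.

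\emph{Main obstacle.} The geometric separation, the cut-offs and the orthogonal-range identity are the structural skeleton and are routine; the crux is the last step — pushing the ``lower-order'' character of the commutator corrections from the coarse level of the quantities $\Db^\pm$ (where Observation 2 suffices) down to the sharp remainder $o(\la^{-1})$, under the sole hypothesis $V_j\in L^\Psi$, with no $L_2$ or $L_\infty$ control on the densities. What makes this go through is precisely the uniform $O(\la^{-1})$ a priori bound of Theorem \ref{ThEstimate}, which lets the $\ve$-loss be absorbed. An equivalent, perhaps more transparent, viewpoint: because $\supp P_1$ and $\supp P_2$ are a positive distance apart, the cross operator $\Tb(P_1)\Tb(P_2)$ is governed by the \emph{off-diagonal} part of the Schwartz kernel of $\AF^*\AF$, which is smooth; hence this cross term is negligible, and that smoothness is the analytic heart of the additivity.
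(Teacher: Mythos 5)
Your proof is correct, and it takes a genuinely different (though closely related) route from the paper's. The paper fixes disjoint open sets $\Om_1,\Om_2$ with $\overline{\Si_j}\subset\Om_j$ and $\overline{\Om_1}\cup\overline{\Om_2}\supset\Om$, decomposes every $u\in L_2(\Om)$ sharply as $u=u_1\oplus u_2$, so that the two principal forms act on the orthogonal subspaces $L_2(\Om_1)$ and $L_2(\Om_2)$ (whence exact additivity of the counting functions), and then disposes of the cross form $\Fb_R$, and of the discrepancy between $\Tb(P_j)$ and its localized version, by Observation 4 (the commutator $[\hi,\AF]$ has order $-l-1$) combined with Cauchy--Schwarz; this remainder analysis has to be repeated three times ($\Fb_R$, $\Fb_{R_1}$, $\Fb_{R_2}$). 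You instead keep the whole space $L_2(\Om)$ and localize the operators rather than the functions: since $\chi_j\equiv 1$ on $\supp P_j$ and $V_j=0$ a.e.\ off it, you get $\Tb(P_j)=\Tb^0_j+\Sc_j$ with $\Tb^0_j=\chi_j\,\Tb(V_j,\Si_j,\AF)\,\chi_j$, and exact additivity comes from $\Tb^0_1\Tb^0_2=0$ (orthogonal ranges, mutually annihilating self-adjoint operators) rather than from an orthogonal decomposition of the Hilbert space; the remainders $\Sc_j$ are again commutator terms of order $-l-1$, absorbed via the $\ve$-inequality of Observation 3 together with the a priori $O(\la^{-1})$ bound of Theorem \ref{ThEstimate}. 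The analytic ingredients are thus identical -- lower order of the commutator, Cauchy--Schwarz, and the uniform estimate to absorb the $\ve$-loss -- but your bookkeeping is done once per measure, you do not need the covering condition $\overline{\Om_1}\cup\overline{\Om_2}\supset\Om$, and your treatment of the final step is, if anything, more explicit than the paper's terse ``makes a weaker contribution'': in both proofs the relation \eqref{Localization} is really established at the level of the functionals $\Db^{\pm}$ (two-sided Ky Fan inequalities plus the $O(\la^{-1})$ bound, letting $\la\to 0$ and then $\ve\to 0$), which is exactly how the lemma is used later. Your closing remark about the off-diagonal smoothness of the kernel of $\AF\AF^*$ is a fair heuristic but is not needed for the argument.
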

\begin{proof} Consider two disjoint open sets $\Om_1,\Om_2\subset \Om$, such that $\overline{\Si_j}\subset\Om_j$ and $\overline{\Om_1}\cup\overline{\Om_2}\supset\Om$.
Every function $u\in L_2(\Om)$ splits into the (orthogonal) sum $u=u_1\oplus u_2$, $u_j\in L_2(\Om_j)$. The quadratic form of the operator $\Tb(P_1+P_2)$ splits as follows
\begin{align}\label{splitting}
&  \Fb(P_1+P_2)[u] :=  (\Tb(P_1+P_2)u,u)_{L_2(\Om)} \\ \nonumber & =\int_\Om V_1(X)|\AF(u_1)(X)+ \AF(u_2)(X)|^2 d\m_{\Si_1}
+  \int_\Om V_2(X)|\AF(u_1)(X)+ \AF(u_2)(X)|^2d\m_{\Si_2}\\ \nonumber
& = \Fb_1(P_1)[u_1]+\Fb_2(P_2)[u_2]+\Fb_R[u_1,u_2]\\
\nonumber & :=\int_\Om V_1(X)|\AF(u_1)(X)|^2 d\mu_{\Si_1}+\int_\Om V_2(X)|\AF(u_2)(X)|^2d\mu_{\Si_2}+\Fb_R[u_1,u_2],
\end{align}
where  the remainder term $ \Fb_R[u_1,u_2]$ is a form in functions $u_j\in L_2(\Om_j)$ with the following property: if a term in $\Fb_R$ contains $V_j$, then it necessarily contains
$u_{3-j}$, so it always contains a measure and a function with disjoint supports. If such a term has the form $\int_{\Om} V_1|\AF u_2|^2 d\mu_{\Si_1}$, the corresponding operator $\Tb$ satisfies $n_{\pm}(\la, \Tb)=o(\la^{-1})$ by Observation 4. If, on the other hand, such term has the form $\int_{\Om} V_1(\AF u_1)\overline{(\AF u_2)}d\m_{\Si_1}$, then by the Schwartz inequality,
 $$\left|\int_{\Om} V_1(\AF u_1)\overline{(\AF u_2)}d\m_{\Si_1}\right|\le \left(\int_{\Om} |V_1||\AF u_1|^2d{\mu_{\Si_1}}\right)^{1/2}\left(\int_{\Om} |V_1||\AF u_2|^2d{\mu_{\Si_1}}\right)^{1/2},$$
 and the last factor, again, provides the required $o$ estimate for the eigenvalues.
Now we observe that the forms $\Fb_1(P_1)[u_1],\,\Fb_2(P_2)[u_2]$ in \eqref{splitting} act on orthogonal subspaces. Then the spectrum of the sum of the corresponding operators $\Tb_1,\Tb_2$  equals the union of the spectra of the summands, and hence $n_{\pm}(\la,\Tb_1+\Tb_2)=
n_{\pm}(\la,\Tb_1)+n_{\pm}(\la,\Tb_2).$ Since the term $\Fb_R$ in  \eqref{splitting} makes a weaker contribution,
\begin{equation}\label{splitting1}
n_{\pm}(\la,\Tb(P_1+P_2))=
n_{\pm}(\la,\Tb_1)+n_{\pm}(\la,\Tb_2)+o(\la^{-1}).
\end{equation}
Now consider the operator $\Tb(P_1)$. It has the quadratic form
 \begin{equation*}
 \Fb(P_1)[u]=\int V_1(X)|\AF(u_1\oplus u_2)|(X)d\mu_{\Si_1}.
 \end{equation*}

  Similarly to \eqref{splitting}, we represent it as
\begin{equation}\label{splitting2}\Fb(P_1)[u]=\int_\Om V_1(X)|\AF(u_1)(X)|^2 d\mu_{\Si_1}+\Fb_{R_1}[u_1,u_2],
\end{equation}
with $\Fb_{R_1}$ having the same structure as $\Fb_{R}$ in \eqref{splitting}. Again, the form $\Fb_{R_1}$ generates an operator with eigenvalues decaying faster than $k^{-1}$,
and we obtain
\begin{equation}\label{splitting3}n_{\pm}(\la, \Tb(P_1))=n_{\pm}(\la,\Tb_1)+o(\la^{-1}).
\end{equation}
In the same way,
\begin{equation}\label{splitting4}n_{\pm}(\la, \Tb(P_2))=n_{\pm}(\la,\Tb_2)+o(\la^{-1}).
\end{equation}
Finally, we substitute \eqref{splitting3}, \eqref{splitting4} into \eqref{splitting1} to obtain \eqref{Localization}.
\end{proof}

The following corollary of Lemma \ref{LemLocalization} allows one to separate the positive and the negative parts of the function $V$ when studying the distribution of the positive and the negative eigenvalues of $\Tb(V,\Si,\AF)$.
\begin{cor}\label{cor.plusminus} Let $\Si$ be a Lipschitz surface and $V\in L^{\Psi}(\Si)$. Let $\Si_{\pm}$  be relatively open subsets of $\Si$ such that
$V_{\pm}\ge 0$ in $\Si_{\pm}$, $V=0$ in $\Si\setminus(\Si_+\cup\Si_-)$, and $\dist(\overline{\Si_+},\overline{\Si_-})>0$.
Then
\begin{equation}\label{sign.separation}
n_{\pm}(\la, \Tb(V,\Si))=n_+(\la, \Tb(V_\pm, \Si))+o(\la^{-1}) \ \mbox{ as }\ \la\to 0.
\end{equation}
\end{cor}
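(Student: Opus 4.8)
The plan is to deduce Corollary~\ref{cor.plusminus} directly from Lemma~\ref{LemLocalization} applied to the splitting $V = V_+ - V_-$, exploiting the sign of each summand. First I would write $P = P_+ + P_-$, where $P_\pm := (\pm V_\pm)\,\m_\Si$; more precisely, set $P_1 := V_+\,\m_{\Si_+}$ and $P_2 := (-V_-)\,\m_{\Si_-}$, viewing each as a measure supported on the compact Lipschitz surface $\overline{\Si_\pm}$ (of dimension $d$). Since $V = 0$ on $\Si\setminus(\Si_+\cup\Si_-)$ and $\dist(\overline{\Si_+},\overline{\Si_-})>0$, the hypotheses of Lemma~\ref{LemLocalization} are met, so
\begin{equation}\label{plan1}
n_{\pm}(\la,\Tb(V,\Si)) = n_{\pm}(\la,\Tb(P_1)) + n_{\pm}(\la,\Tb(P_2)) + o(\la^{-1}).
\end{equation}

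The next step is to observe that $\Tb(P_1) = \Tb(V_+,\Si_+)$ is a nonnegative operator, because its quadratic form $\int_{\Si_+} V_+|\AF u|^2\,d\m_{\Si_+}$ is manifestly $\ge 0$; hence $n_-(\la,\Tb(P_1)) = 0$ and $n_+(\la,\Tb(P_1)) = n_+(\la,\Tb(V_+,\Si))$ for all $\la > 0$ (the form over $\Si_+$ equals the form over $\Si$ since $V_+$ vanishes off $\Si_+$ up to a null set). Symmetrically, $\Tb(P_2) = -\Tb(V_-,\Si_-)$ is nonpositive, so $n_+(\la,\Tb(P_2)) = 0$ and $n_-(\la,\Tb(P_2)) = n_+(\la,\Tb(V_-,\Si))$. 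Substituting these two facts into \eqref{plan1}, the upper-sign instance gives $n_+(\la,\Tb(V,\Si)) = n_+(\la,\Tb(V_+,\Si)) + o(\la^{-1})$ and the lower-sign instance gives $n_-(\la,\Tb(V,\Si)) = n_+(\la,\Tb(V_-,\Si)) + o(\la^{-1})$, which is precisely \eqref{sign.separation}.

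The argument is essentially bookkeeping once Lemma~\ref{LemLocalization} is in hand, so there is no serious obstacle; the only point requiring a line of care is the identification of $\Tb(V_\pm,\Si_\pm)$ with $\Tb(V_\pm,\Si)$, i.e.\ checking that extending the integration from the relatively open set $\Si_\pm$ to all of $\Si$ does not change the form — this holds because $V$ is supported in $\overline{\Si_+}\cup\overline{\Si_-}$ and $V_\pm \equiv 0$ on the complement of $\Si_\pm$. One should also note that $\Si_\pm$, being relatively open in a compact Lipschitz surface, need not itself be a "surface" in the sense of the paper, but one may replace it by a slightly larger compact Lipschitz surface still disjoint from the other part (or simply observe that the proof of Lemma~\ref{LemLocalization} only uses $\Si_j$ through the measure $P_j$ and the separation of supports), so Lemma~\ref{LemLocalization} applies verbatim.
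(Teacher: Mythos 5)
Your proof is correct and follows essentially the same route as the paper: the paper likewise invokes the splitting formula \eqref{splitting1} from the proof of Lemma \ref{LemLocalization} with $P_1$, $P_2$ the restrictions of $P=V\m_\Si$ to $\Si_+$ and $\Si_-$, and then uses that $n_-(\la,\Tb(V,\Si_+))=n_+(\la,\Tb(V,\Si_-))=0$ by sign-definiteness. Your extra remark that the lemma only uses the measures and the separation of their supports (not that $\Si_\pm$ is itself a closed surface) is a fair point the paper leaves implicit.
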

In other words, up to a lower order remainder, the behavior of the positive, respectively negative, eigenvalues of the operator $\Tb(V)$ is determined by the positive, respectively negative, part of the density $V$. To prove this property, we can use \eqref{splitting1}, taking as $P_1$ the restriction of the measure $P$ to the set $\Si_+$, and as $P_2$ its restriction to $\Si_-$, and recall that $n_-(\la, \Tb(V,\Si_+))=n_+(\la, \Tb(V,\Si_-))=0$.

\section{Geometry considerations}
Our proof of Theorems \ref{ThEstimate} and \ref{thm.eig.ahlf}  relies upon certain geometric observations that might be of an independent interest.

Let $\mathcal{A}\subset\R^{\Nb}$ be a $k$-dimensional affine subspace, $0 \le k < \mathbf{N}$, $\mathcal{A} = a + \mathcal{L},$
where $a \in \mathbb{R}^\mathbf{N}$ and $\mathcal{L}$ is a $k$-dimensional linear subspace in $\mathbb{R}^\mathbf{N}$. In the case $k = 0$,
$\mathcal{L}$ is a $0$-dimensional linear subspace, i.e $\mathcal{L} = \{0\}$, and $\mathcal{A}$ is just the singleton $\{a\}$.
The polar plane of $\Ac$ is the $\mathbf{N} - k$ dimensional linear subspace of $\mathbb{R}^\mathbf{N},$
\begin{equation}\label{perp}
\mathcal{A}^\perp := \mathcal{L}^\perp = \{Y \in \mathbb{R}^\mathbf{N} : \ (X, Y) = 0 \ \mbox{ for all } \ X \in  \mathcal{L}\} ,
\end{equation}
where $(\cdot, \cdot)$ denotes the standard inner product in $\mathbb{R}^\mathbf{N}$.
  We will say that $\mathcal{A}$ is orthogonal to
a vector $b \in \mathbb{R}^\mathbf{N}\setminus\{0\}$ if $b \in \mathcal{A}^\perp$.
For $k = \mathbf{N} - 1$,  $\mathcal{A}^\perp$ is the one-dimensional linear subspace spanned by $b$.
For a linear subspace $\mathcal{M}$ in $\mathbb{R}^\mathbf{N}$, we denote by $\Mc_\mathbb{S}$ the trace of $\Mc$ on  $\mathbb{S}^{\Nb-1}$,
$\mathcal{M}_{\mathbb{S}} := \mathcal{M}\cap\mathbb{S}^{\mathbf{N} - 1} ,$
where $\mathbb{S}^{\mathbf{N} - 1}$ is the unit sphere in $\mathbb{R}^\mathbf{N}$, $\mathbb{S}^{\mathbf{N} - 1} := \{X \in \mathbb{R}^\mathbf{N} : \ |X| = 1\} .$

\begin{lem}\label{geom}
Let $\WF$ be an at most countable family of proper linear subspaces of $\mathbb{R}^\mathbf{N}$. Then there exists
an orthonormal basis $\eb_1, \dots, \eb_\mathbf{N}$ of $\mathbb{R}^\mathbf{N}$ such that
$$
\eb_j \not\in \cup_{\mathcal{M} \in \WF} \mathcal{M} , \quad j = 1, \dots, \mathbf{N} .
$$
\end{lem}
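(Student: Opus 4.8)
The plan is to build the basis one vector at a time, using a Baire category or measure-theoretic argument to show that at each stage the set of "bad" unit vectors is small, hence cannot exhaust the relevant sphere.

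First I would observe that each proper subspace $\Mc\in\WF$ is a closed set with empty interior in $\R^\Nb$, and more to the point, $\Mc_\Sl := \Mc\cap\Sl^{\Nb-1}$ is a closed subset of $\Sl^{\Nb-1}$ of measure zero with respect to the standard surface measure (its dimension is at most $\Nb-2$). Since $\WF$ is at most countable, the union $\bigcup_{\Mc\in\WF}\Mc_\Sl$ is a countable union of measure-zero sets, hence has measure zero, so its complement in $\Sl^{\Nb-1}$ is nonempty (in fact dense and of full measure). This immediately produces a unit vector $\eb_1\notin\bigcup_{\Mc\in\WF}\Mc$.

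The key step is then the induction. Suppose $\eb_1,\dots,\eb_{j-1}$ have been chosen, orthonormal, each avoiding all $\Mc\in\WF$. Let $\Lc_{j-1}^\perp$ be the orthogonal complement of $\Span\{\eb_1,\dots,\eb_{j-1}\}$, a linear subspace of dimension $\Nb-j+1\ge 1$. I need a unit vector $\eb_j\in\Lc_{j-1}^\perp$ with $\eb_j\notin\bigcup_{\Mc\in\WF}\Mc$. The point is that for each $\Mc\in\WF$, the intersection $\Mc\cap\Lc_{j-1}^\perp$ is a subspace of $\Lc_{j-1}^\perp$; if this intersection is all of $\Lc_{j-1}^\perp$ for some $\Mc$, the argument would fail, so the crucial subtlety is that this cannot happen when $\Nb-j+1\ge 1$... except it can, in general! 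A proper subspace $\Mc$ of $\R^\Nb$ could certainly contain a given line $\Lc_{j-1}^\perp$ when $\dim\Lc_{j-1}^\perp=1$. This is the main obstacle, and it means the naive greedy induction as stated does not work; one must choose the earlier vectors with foresight so that $\Lc_{j-1}^\perp$ is never trapped inside any $\Mc$.

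To handle this I would instead argue as follows. Enlarge $\WF$ to the countable family $\WF'$ consisting of all subspaces obtainable as sums $\Mc + \Span\{\eb_{i_1},\dots,\eb_{i_r}\}$ — but since the $\eb_i$ are not yet chosen, it is cleaner to proceed by a direct dimension count at each step: having chosen $\eb_1,\dots,\eb_{j-1}$, consider, for each $\Mc\in\WF$, the set $\Mc\cap\Lc_{j-1}^\perp$; this is a proper subspace of $\Lc_{j-1}^\perp$ \emph{provided} $\Lc_{j-1}^\perp\not\subset\Mc$. So the inductive hypothesis I actually carry is the stronger statement: $\eb_1,\dots,\eb_{j-1}$ are orthonormal, each avoids $\bigcup\WF$, \emph{and} for every $\Mc\in\WF$, $\Lc_{j-1}^\perp\not\subset\Mc$. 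To maintain the last clause, note that at step $j$ I must pick $\eb_j\in\Lc_{j-1}^\perp$ avoiding, in addition to the $\Mc\cap\Lc_{j-1}^\perp$, also those (countably many) unit vectors $v\in\Lc_{j-1}^\perp$ such that the hyperplane $v^\perp\cap\Lc_{j-1}^\perp$ (inside $\Lc_{j-1}^\perp$) lies in some $\Mc\in\WF$ — equivalently, $\eb_j$ must avoid the subspaces $(\Mc^\perp\cap\Lc_{j-1}^\perp)$ for those $\Mc$ with $\dim(\Mc\cap\Lc_{j-1}^\perp)=\dim\Lc_{j-1}^\perp - 1$. All of these are again proper subspaces of $\Lc_{j-1}^\perp$ (when $\dim\Lc_{j-1}^\perp\ge 2$; the case $\dim\Lc_{j-1}^\perp=1$, i.e. $j=\Nb$, is handled separately since then $\Lc_{\Nb-1}^\perp$ is a line and the inductive clause guarantees it lies in no $\Mc$, so $\eb_\Nb$ is forced and already good). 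Thus at each step the bad set is a countable union of proper subspaces of $\Lc_{j-1}^\perp\cap\Sl^{\Nb-1}$, which is a sphere of dimension $\Nb-j\ge 0$; such a countable union has measure zero in that sphere (for $\Nb-j\ge1$) and cannot contain the sphere (for $\Nb-j=0$ this is trivial), so a good $\eb_j$ exists. Carrying out the bookkeeping of exactly which auxiliary subspaces must be avoided to preserve the inductive clause, and verifying they are all proper, is the routine-but-delicate part of the proof; the conceptual content is just "a sphere is not a countable union of lower-dimensional subspheres."
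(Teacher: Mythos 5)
Your proposal is correct and takes essentially the same route as the paper: the paper proves the lemma by induction on the dimension, choosing a unit vector that avoids the measure-zero union of the traces $\mathcal{M}\cap\mathbb{S}^{\mathbf{N}-1}$ together with the perpendiculars of the codimension-one members of the family, and then recursing in the orthogonal complement with the induced family $\{\mathcal{M}_0\cap\mathcal{M}\}$ -- which is exactly your greedy scheme with the strengthened invariant, written top-down instead of bottom-up. One minor slip worth fixing: the set you must avoid to preserve the invariant is the orthogonal complement of $\mathcal{M}\cap\mathcal{L}_{j-1}^\perp$ taken \emph{inside} $\mathcal{L}_{j-1}^\perp$, i.e. $\left(\mathcal{M}^\perp+\mathcal{L}_{j-1}\right)\cap\mathcal{L}_{j-1}^\perp$, which in general strictly contains $\mathcal{M}^\perp\cap\mathcal{L}_{j-1}^\perp$, so your parenthetical ``equivalently'' is not an equivalence -- but your preceding (correct) description of the bad directions as the at most countably many unit vectors $v$ with $v^\perp\cap\mathcal{L}_{j-1}^\perp\subseteq\mathcal{M}$ is what actually carries the argument, so the proof stands.
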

\begin{proof}
The proof is by induction on $\mathbf{N}$. There is nothing to prove if $\mathbf{N} = 1$ since the only proper linear subspace of $\mathbb{R}$ is $\mathcal{M} = \{0\}$.
Suppose that the statement is true for $\mathbf{N} = \Nb_0$ and let us prove it for $\mathbf{N} = \Nb_0+1$. Let $\WF'$ be the (possibly empty) subset of $\WF$ consisting of
all $\Nb_0$-dimensional $\mathcal{M} \in \WF$. For every such $\mathcal{M}$, \, $\mathcal{M}^\perp$ is $1$-dimensional and $\mathcal{M}^\perp_\mathbb{S}$
consists of two points. Since $\cup_{\mathcal{M} \in \WF} \mathcal{M}_\mathbb{S}$
is an at most countable union of spheres of dimension at most $\Nb_0- 1$, the set
$$
\Theta := \left(\cup_{\mathcal{M} \in \WF'} \mathcal{M}^\perp_\mathbb{S}\right) \bigcup \left(\cup_{\mathcal{M} \in \WF} \mathcal{M}_\mathbb{S}\right)
$$
has $\Nb_0$-dimensional Lebesgue measure equal to 0. So, $\Theta \not= \mathbb{S}^{\Nb_0}$. Take any vector $\eb_{\Nb_0 + 1} \in \mathbb{S}^{\Nb_0}\setminus \Theta$
and let $\mathcal{M}_0$ be the $\Nb_0$-dimensional linear subspace of $\mathbb{R}^{\Nb_0 + 1}$ orthogonal to $\eb_{\Nb_0 + 1}$. Since
$$\eb_{\Nb_0 + 1} \not\in \cup_{\mathcal{M} \in \WF'} \mathcal{M}^\perp_\mathbb{S} ,$$
$\mathcal{M}_0$ does not coincide with any element of $\WF'$, and hence the dimension of $\mathcal{M}_0\cap\mathcal{M}$ is at most $\Nb_0 - 1$ for every
$\mathcal{M} \in \WF$. Then, by the inductive assumption, there exists an orthonormal basis $\eb_1, \dots, \eb_{\Nb_0}$ of $\mathcal{M}_0$ such that
$$
\eb_j \not\in \cup_{\mathcal{M} \in \WF} (\mathcal{M}_0\cap\mathcal{M})
= \mathcal{M}_0\cap\left(\cup_{\mathcal{M} \in \WF} \mathcal{M}\right), \quad j = 1, \dots, \Nb_0 .
$$
It is clear that $\eb_1, \dots, \eb_{\Nb_0}, \eb_{\Nb_0 + 1}$ is an orthonormal basis of $\mathbb{R}^{\Nb_0 + 1}$ and
$$
\eb_j \not\in \cup_{\mathcal{M} \in \WF} \mathcal{M} , \quad j = 1, \dots, \Nb_0 + 1 .
$$
\end{proof}

\begin{lem}\label{l1}
Let $\mu$  be a $\sigma$-finite Borel measure on $\mathbb{R}^\mathbf{N}$ such that
$\mu(\mathcal{A}) = 0$ for every $(k - 1)$-dimensional affine subspace $\Ac \subset \mathbb{R}^\mathbf{N}$ for some $k\in[1, \mathbf{N} - 1]$.
Then the set $\XF_k$ of $k$-dimensional affine subspaces $\mathcal{E}$ of $\mathbb{R}^\mathbf{N}$ such that
$\mu(\mathcal{E}) > 0$ is at most countable.
\end{lem}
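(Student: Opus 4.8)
The plan is to reduce to the case of a finite measure by $\sigma$-finiteness, and then to use the elementary fact that two distinct $k$-dimensional affine subspaces can meet only in a set which the hypothesis forces to be $\mu$-null. Once the $k$-dimensional affine subspaces are known to be pairwise disjoint modulo $\mu$-null sets, the countability of $\XF_k$ follows from the standard pigeonhole bound on the number of sets of measure $>1/m$ in an almost disjoint family.

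First I would record the geometric observation. If $\mathcal{E}_1 = a_1 + \mathcal{L}_1$ and $\mathcal{E}_2 = a_2 + \mathcal{L}_2$ are \emph{distinct} $k$-dimensional affine subspaces of $\mathbb{R}^{\mathbf{N}}$ (so $\dim\mathcal{L}_1 = \dim\mathcal{L}_2 = k$), then $\mathcal{E}_1\cap\mathcal{E}_2$ is either empty or an affine subspace of dimension at most $k-1$: if $p\in\mathcal{E}_1\cap\mathcal{E}_2$, then $\mathcal{E}_1\cap\mathcal{E}_2 = p + (\mathcal{L}_1\cap\mathcal{L}_2)$, and $\mathcal{L}_1\cap\mathcal{L}_2 = \mathcal{L}_1$ would force $\mathcal{L}_1 = \mathcal{L}_2$ (equal dimensions) and hence $\mathcal{E}_1 = p + \mathcal{L}_1 = \mathcal{E}_2$. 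Since every affine subspace of dimension $j\le k-1$ is contained in some $(k-1)$-dimensional affine subspace, the hypothesis on $\mu$ gives $\mu(\mathcal{E}_1\cap\mathcal{E}_2) = 0$ whenever $\mathcal{E}_1\ne\mathcal{E}_2$ are $k$-dimensional. Thus the family of all $k$-dimensional affine subspaces is pairwise disjoint modulo $\mu$-null sets.

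Next, for a \emph{finite} Borel measure $\nu$ on $\mathbb{R}^{\mathbf{N}}$ I would show that $\nu(\mathcal{E})>0$ for at most countably many $k$-dimensional affine subspaces $\mathcal{E}$. Fix $m\in\mathbb{N}$, and let $\mathcal{E}_1,\dots,\mathcal{E}_L$ be distinct $k$-dimensional affine subspaces with $\nu(\mathcal{E}_l)>1/m$. Writing $F_l := \mathcal{E}_l\setminus\bigcup_{l'<l}\mathcal{E}_{l'}$ we get disjoint sets with $\bigcup_l F_l = \bigcup_l \mathcal{E}_l$ and $\nu(F_l) = \nu(\mathcal{E}_l)$, because $\mathcal{E}_l\setminus F_l \subseteq \bigcup_{l'<l}(\mathcal{E}_l\cap\mathcal{E}_{l'})$ is $\nu$-null. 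Hence
\[
\frac{L}{m} < \sum_{l=1}^{L}\nu(\mathcal{E}_l) = \nu\Bigl(\bigcup_{l=1}^{L}\mathcal{E}_l\Bigr) \le \nu(\mathbb{R}^{\mathbf{N}}),
\]
so $L < m\,\nu(\mathbb{R}^{\mathbf{N}})$. Therefore $\{\mathcal{E} : \nu(\mathcal{E})>1/m\}$ is finite for every $m$, and $\{\mathcal{E} : \nu(\mathcal{E})>0\} = \bigcup_m\{\mathcal{E} : \nu(\mathcal{E})>1/m\}$ is at most countable.

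Finally, using $\sigma$-finiteness, write $\mathbb{R}^{\mathbf{N}} = \bigcup_n \Omega_n$ with $\mu(\Omega_n)<\infty$, and apply the previous step to $\nu_n := \mu(\,\cdot\,\cap\Omega_n)$. If $\mu(\mathcal{E})>0$ then $\nu_n(\mathcal{E})>0$ for some $n$, so $\XF_k \subseteq \bigcup_n\{\mathcal{E} : \nu_n(\mathcal{E})>0\}$, a countable union of at most countable sets, whence $\XF_k$ is at most countable. There is no genuine obstacle here; the only points that need a little care are that \emph{every} affine subspace of dimension $\le k-1$ — not just those of dimension exactly $k-1$ — is $\mu$-null (immediate, since it sits inside a $(k-1)$-dimensional one), and that the pigeonhole estimate must be carried out after passing to a finite measure. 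Note that this argument is self-contained and does not invoke Lemma~\ref{geom}.
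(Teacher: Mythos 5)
Your proposal is correct and follows essentially the same route as the paper: reduce to a finite measure by $\sigma$-finiteness, observe that distinct $k$-dimensional affine subspaces meet in an affine subspace of dimension at most $k-1$ and hence in a $\mu$-null set, then disjointify and use finiteness of the total measure to bound the number of subspaces of measure exceeding a fixed threshold. The only cosmetic difference is that you count the subspaces with measure $>1/m$ directly, whereas the paper runs the same estimate as a proof by contradiction with a fixed $\delta$.
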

\begin{proof}
The proof is similar to that of \cite[Lemma 2.13]{KarSh}.
It is sufficient to prove the lemma for finite measures as the general case then follows easily from the assumption that
$\mu$ is $\sigma$-finite.
Suppose that $\XF_k$ is uncountable.
Then there exists a $\delta > 0$ such that the set
$$
\XF_{k,\delta}:= \left\{\mathcal{E} \in \XF_k\;:\;  \mu(\mathcal{E}) > \delta\right\}
$$
is infinite. Otherwise, $\XF_k = \underset{m\in\mathbb{N}}\cup \XF_{k, \frac{1}{m}}$ would have been at most countable. Now take
distinct $\mathcal{E}_1,..., \mathcal{E}_j,... \in \XF_{k, \delta}$. We have
$\mu\left(\mathcal{E}_j\right) > \delta \quad\mbox{for all}\quad j\in\mathbb{N}\,.$

Since $\mathcal{\Ec}_j\cap \mathcal{\Ec}_{j'} ,\;j\neq j',$ is an affine subspace of dimension at most $k - 1$,
 $$
 \mu\left(\underset{j \neq j'}\cup (\mathcal{E}_j\cap \mathcal{E}_{j'})\right) = 0.
 $$
Let
$$
\tilde{\mathcal{E}}_j:= \mathcal{E}_j\setminus\underset{j' \neq j}\cup (\mathcal{E}_{j'}\cap \mathcal{E}_j)\, .
$$
Then $\tilde{\mathcal{E}}_j\cap \tilde{\mathcal{E}}_{j'}  = \varnothing,\;j \neq j'$ and $\m(\tilde{\Ec}_j)=\m(\Ec_j)$.
So, since $\m$ is finite,
$$
\sum_{j\in\mathbb{N}} \mu\left(\tilde{\mathcal{E}}_j \right) =
\mu\left(\underset{j\in\mathbb{N}}\cup \tilde{\mathcal{E}}_j  \right)  < \infty\,.
$$
On the other hand,
$\mu\left(\tilde{\mathcal{E}}_j \right) = \mu\left(\mathcal{E}_j \right) > \delta$
implies that
$\sum_{l\in\mathbb{N}} \mu_k\left(\tilde{\mathcal{E}}_j\right) \geq \underset{j\in\mathbb{N}}\sum\delta = \infty$.
This contradiction means that $\XF_k$ is at most countable.
\end{proof}
We arrive at our main geometric statement, considerably more general than what we actually need here.
\begin{thm}\label{mm}
Let $\mu$ be a  $\sigma$-finite Borel measure on $\mathbb{R}^\mathbf{N}$ without point masses.
Then there exists an orthonormal basis $\eb_1, \dots, \eb_\mathbf{N}$ of $\mathbb{R}^\mathbf{N}$ such that
$\mu(\mathcal{E}) = 0$ for every \hbox{$(\mathbf{N}-1)$}-dimensional affine subspace $\mathcal{E}$ of $\mathbb{R}^\mathbf{N}$ orthogonal to an element of this basis.
\end{thm}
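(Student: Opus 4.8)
The plan is to deduce the theorem from Lemma \ref{geom}. An $(\mathbf{N}-1)$-dimensional affine subspace $\mathcal{E}$ is orthogonal to a unit vector $\eb$ precisely when $\eb$ spans $\mathcal{E}^\perp$, i.e. when $\eb$ is a unit normal direction of $\mathcal{E}$. Call a hyperplane $\mathcal{E}$ \emph{heavy} if $\mu(\mathcal{E})>0$, and let $S(\mu)\subset\mathbb{S}^{\mathbf{N}-1}$ be the (symmetric) set of unit normal directions of heavy hyperplanes. If I can show that $S(\mu)$ is contained in an at most countable union of proper linear subspaces of $\mathbb{R}^\mathbf{N}$, then Lemma \ref{geom} produces an orthonormal basis $\eb_1,\dots,\eb_\mathbf{N}$ none of whose members lies in any of these subspaces; consequently no hyperplane whose normal is $\pm\eb_j$ is heavy, which is exactly the assertion of the theorem. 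So everything reduces to controlling $S(\mu)$.

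To do that I first peel a ``regular part'' off $\mu$. Put $\mu^{(0)}:=\mu$; it vanishes on every $0$-dimensional affine subspace (no point masses). Inductively, if $\mu^{(k-1)}$ is a $\sigma$-finite Borel measure vanishing on every affine subspace of dimension $\le k-1$, then for $1\le k\le\mathbf{N}-1$ Lemma \ref{l1} applies and shows that the family $\XF_k$ of $k$-dimensional affine subspaces $\mathcal{E}$ with $\mu^{(k-1)}(\mathcal{E})>0$ is at most countable; set $\mathcal{L}_k:=\bigcup\XF_k$ (a Borel set) and $\mu^{(k)}:=\mu^{(k-1)}\big|_{\mathbb{R}^\mathbf{N}\setminus\mathcal{L}_k}$. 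A routine check shows $\mu^{(k)}$ vanishes on every affine subspace of dimension $\le k$. After $\mathbf{N}-1$ steps, $\mu^{(\mathbf{N}-1)}$ vanishes on \emph{every} hyperplane, while $\mu-\mu^{(\mathbf{N}-1)}=\sum_{k=1}^{\mathbf{N}-1}\mu^{(k-1)}\big|_{\mathcal{L}_k}$ is concentrated on the at most countable union $\mathcal{L}:=\bigcup_{k=1}^{\mathbf{N}-1}\mathcal{L}_k$ of affine subspaces $\mathcal{A}$ of dimensions $d_\mathcal{A}\in\{1,\dots,\mathbf{N}-1\}$. Hence every heavy hyperplane $\mathcal{E}$ satisfies $\mu(\mathcal{E}\cap\mathcal{A})>0$ for some $\mathcal{A}=a+L\in\mathcal{L}$.

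Now I run an induction on $\mathbf{N}$ to prove that $S(\mu)$ lies in a countable union of proper subspaces (the case $\mathbf{N}=1$ is vacuous, there being no heavy ``hyperplanes''). Fix a heavy $\mathcal{E}$ with normal direction $v$ and a corresponding $\mathcal{A}=a+L\in\mathcal{L}$ as above. If $\mathcal{E}\supseteq\mathcal{A}$, then $v\perp L$, so $v$ lies in the proper subspace $L^\perp$; since there are only countably many $\mathcal{A}$, these contribute a countable union of proper subspaces. If $\mathcal{E}\not\supseteq\mathcal{A}$, then $\mathcal{E}\cap\mathcal{A}$ is a hyperplane of $\mathcal{A}\cong\mathbb{R}^{d_\mathcal{A}}$, heavy for the restriction $\mu\big|_{\mathcal{A}}$ (a $\sigma$-finite Borel measure on $\mathbb{R}^{d_\mathcal{A}}$ without point masses), and its normal direction \emph{within} $\mathcal{A}$ is $P_Lv/|P_Lv|$, where $P_L$ is the orthogonal projection onto $L$ and $P_Lv\neq0$ exactly because $\mathcal{E}\not\supseteq\mathcal{A}$. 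By the inductive hypothesis in dimension $d_\mathcal{A}<\mathbf{N}$, this direction lies in an at most countable union of proper subspaces $S_i\subset L$, so $v\in P_L^{-1}(S_i)=S_i\oplus L^\perp$ for some $i$; since $\dim S_i\le d_\mathcal{A}-1$, this preimage has dimension $\le\mathbf{N}-1$ and is a proper subspace of $\mathbb{R}^\mathbf{N}$. Collecting the countably many subspaces from all $\mathcal{A}\in\mathcal{L}$ and both cases shows $S(\mu)$ is contained in a countable union of proper subspaces, which closes the induction; combined with the first paragraph, this proves the theorem.

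The main obstacle is the case $\mathcal{E}\not\supseteq\mathcal{A}$: the peeling argument by itself does not localise the normal of $\mathcal{E}$, because the mass of $\mathcal{E}\cap\mathcal{A}$ may concentrate on still lower-dimensional subspaces that are not recorded in $\mathcal{L}$. This is what forces the induction on $\mathbf{N}$, applied to $\mu\big|_{\mathcal{A}}$ on the smaller space $\mathcal{A}$, together with the simple but crucial linear-algebra fact that the $P_L$-preimage of a proper subspace of $L$ is a proper subspace of $\mathbb{R}^\mathbf{N}$. The remaining points to verify carefully are that ``heavy for $\mu$'' passes to ``heavy for $\mu\big|_{\mathcal{A}}$'', that $\mu^{(k)}$ indeed vanishes on all affine subspaces of dimension $\le k$, and that the identification of normal directions between $\mathbb{R}^\mathbf{N}$ and $\mathcal{A}$ is done correctly.
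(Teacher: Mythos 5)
Your proof is correct; the checks you flag at the end (heaviness passes to $\mu\big|_{\mathcal A}$, the peeled measures vanish on all subspaces of dimension $\le k$, the identification of normals via $P_L$) all go through. It shares the paper's skeleton -- the same peeling of $\mu$ through dimensions $1,\dots,\mathbf N-1$ via Lemma \ref{l1}, and Lemma \ref{geom} at the end -- but the concluding step is organized quite differently. You prove the stronger statement that the set of \emph{all} unit normals of heavy hyperplanes lies in an at most countable union of proper subspaces, and the case $\mathcal E\not\supseteq\mathcal A$ forces you to recurse into the charged subspace $\mathcal A$ with the restricted measure, i.e.\ to run an induction on the dimension. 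The paper avoids that induction entirely: it applies Lemma \ref{geom} only to the orthogonal complements $\mathcal E^\perp$ of the charged subspaces $\mathcal E\in\XF_k$, $1\le k\le\mathbf N-1$, and then verifies $\mu(\mathcal E_0)=0$ directly for any hyperplane $\mathcal E_0$ orthogonal to a basis vector: no charged $\mathcal E\in\XF_k$ can be contained in $\mathcal E_0$ (otherwise $\eb_j\in\mathcal E_0^\perp\subseteq\mathcal E^\perp$), so each intersection $\mathcal E_0\cap\mathcal E$ has dimension at most $k-1$ and is automatically $\mu_k$-null, giving $\mu_k^0(\mathcal E_0)=0$ for every $k$, while $\mu_{\mathbf N-1}(\mathcal E_0)=0$ because $\mathcal E_0\notin\XF_{\mathbf N-1}$; unwinding $\mu_{k+1}=\mu_k-\mu_k^0$ yields $\mu(\mathcal E_0)=0$. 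In other words, the obstacle you identify -- mass of $\mathcal E\cap\mathcal A$ hiding on lower-dimensional sets not recorded in your list -- is killed for free in the paper's scheme, because the measure $\mu_k$ at stage $k$ already annihilates all $(k-1)$-dimensional affine subspaces. Your route costs an extra induction on $\mathbf N$ (plus the $P_L$ bookkeeping) but buys a somewhat stronger structural fact about the set of normal directions of heavy hyperplanes; the paper's verification is shorter because it only needs the conclusion for hyperplanes orthogonal to the basis it has just constructed.
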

\begin{proof} Let us first explain the idea of the proof.
By Lemma \ref{l1} for $k=1$, there is an at most countable set of one-dimensional affine subspaces whose $\m$-measure is positive. Subtracting the portion of measure $\m$ living on all these subspaces and applying Lemma \ref{l1} to the resulting measure, this time with $k=2$, we conclude that at most  countably many 2-dimensional affine subspaces are charged positively. We subtract the part of our measure living on these subspaces, and so on. This procedure is repeated in all dimensions, after which it turns out that the remaining measure is zero on \emph{all}  affine subspaces in  $\R^{\N}$, and the proof is completed by applying Lemma \ref{geom}.

Now the formal proof. Denote $\m=\m_1$.
 Lemma \ref{l1} with $k = 1$  implies that the set $\XF_1$ of all $1$-dimensional affine subspaces $\mathcal{E}$ of $\mathbb{R}^\mathbf{N}$ such that
$\mu_1(\mathcal{E}) > 0$ is at most countable. We introduce

$\Zc_1 := \cup_{\mathcal{E} \in \XF_1} \mathcal{E}$,\,
$\mu_1^0(E) := \mu_1(E\cap\Zc_1)$ {for every Borel set} $E \subseteq \mathbb{R}^\mathbf{N},$ { and}
$ \mu_2 := \mu_1 - \mu_1^0. $

Measure $\mu_2$ annuls every  $1$-dimensional affine subspace $\mathcal{E}$. Indeed,
\begin{align*}
 & \mu_2(\mathcal{E}) \le \mu_1(\mathcal{E}) = 0 \quad\mbox{if}\quad \mathcal{E} \not\in  \XF_1 , \\
& \mu_2(\mathcal{E}) = \mu_1(\mathcal{E}) - \mu_1^0(\mathcal{E}) = \mu_1(\mathcal{E}) - \mu_1(\mathcal{E}) = 0 \quad\mbox{if}\quad \mathcal{E} \in  \XF_1 .
\end{align*}
Therefore  we can apply Lemma \ref{l1} again, this time with $k = 2$ and $\m=\m_2$.

Repeating this procedure, we obtain at the $k$-th step a $\sigma$-finite  measure $\mu_k$ on $\mathbb{R}^\mathbf{N}$ such that
$\mu_k(\mathcal{A}) = 0$ for every $(k - 1)$-dimensional affine subspace $\mathcal{A}$ of $\mathbb{R}^\mathbf{N}$ and
the set $\XF_k$ of all $k$-dimensional affine subspaces $\mathcal{E}$ of $\mathbb{R}^\mathbf{N}$ with
$\mu_k(\mathcal{E}) > 0$ is at most countable. Similarly to the above, we set
$ \Zc_k := \cup_{\mathcal{E} \in \XF_k} \mathcal{E} ,$
$\mu_k^0(E) := \mu_k(E\cap\Zc_k)$  {for every Borel set} $E \subseteq \mathbb{R}^\mathbf{N}$,  { and}
$ \mu_{k + 1} := \mu_k - \mu_k^0 .$
Then, similarly to the above, $\mu_{k + 1}(\mathcal{E}) = 0$ for every $k$-dimensional affine subspace $\mathcal{E}$.

For every $\mathcal{E} \in \XF_k$, \ $\mathcal{E}^\perp$ is an $(\mathbf{N} - k)$-dimensional linear subspace of $\mathbb{R}^\mathbf{N}$, $1 \le k \le \mathbf{N} - 1$.
By Lemma \ref{geom}, there exists an orthonormal basis $\eb_1, \dots, \eb_\Nb$ of $\mathbb{R}^\mathbf{N}$ such that
\begin{equation}\label{jnotin}
\eb_j \not\in \pmb{\pmb{\Xi}} := \cup_{k = 1}^{\mathbf{N} - 1} \cup_{\mathcal{E} \in \XF_k} \mathcal{E}^\perp , \quad j = 1, \dots, \Nb .
\end{equation}

Take any $j = 1, \dots, \mathbf{N}$. Let $\mathcal{E}_0$ be an $(\mathbf{N} - 1)$-dimensional affine subspace orthogonal to $\eb_j$. Then
for any $k = 1, \dots, \mathbf{N} - 1$,
\begin{equation}\label{knotin}
\mathcal{E} \in \XF_k \quad\Longrightarrow\quad \mathcal{E} \not\subseteq \mathcal{E}_0 .
\end{equation}
Indeed, if $\mathcal{E} \subseteq \mathcal{E}_0$, then
$$
\eb_j \in \mathcal{E}_0^\perp \subseteq \mathcal{E}^\perp \subseteq \pmb{\pmb{\Xi}} ,
$$
which contradicts \eqref{jnotin}.

Since $\mathcal{E}_0 \not\in \XF_{\Nb - 1}$, \ $\mu_{\Nb - 1}(\mathcal{E}_0) = 0$. For any $\mathcal{E} \in \XF_k$, \eqref{knotin} implies that
$\mathcal{E}_0\cap\mathcal{E}$ is an affine subspace of dimension at most $k - 1$. Then $\mu_k(\mathcal{E}_0\cap\mathcal{E}) = 0$ and
$$
\mu_k^0(\mathcal{E}_0) = \mu_k(\mathcal{E}_0\cap\Zc_k) \le \sum _{\mathcal{E} \in \XF_k} \mu_k(\mathcal{E}_0\cap\mathcal{E}) = 0 .
$$
Hence
\begin{align*}
0 & = \mu_{\Nb - 1}(\mathcal{E}_0) = \mu_{\Nb - 2}(\mathcal{E}_0) - \mu_{\Nb - 2}^0(\mathcal{E}_0) = \mu_{\Nb - 2}(\mathcal{E}_0) =  \mu_{\Nb - 3}(\mathcal{E}_0) \\
& = \cdots =  \mu_1(\mathcal{E}_0) =  \mu(\mathcal{E}_0) .
\end{align*}
\end{proof}

 \section{Estimates}\label{estimates}
 The method of deriving eigenvalue estimates for non-smooth spectral problems developed by M.Sh. Birman and M.Z. Solomyak in the late 60's has remained since then the
 most efficient approach to such singular problems. The method is based on constructing piecewise polynomial approximations of functions in Sobolev spaces.
We follow the presentation in \cite{Sol94}, with necessary modifications caused by the presence of singular measures.
\subsection{A homogeneous  H\"older inequality}
We recall that for a bounded domain $G \subseteq \mathbb{R}^{\mathbf{N}}$, the Sobolev norm and the homogeneous Sobolev semi-norm are defined by
\begin{align*}
\|u\|^2_{H^{s}(G)} := \sum_{|\n|_1 \le s} \left\|\partial^\n u\right\|^2_{L_2(G)} , \qquad
\|u\|^2_{H^{s}_{\mathrm{hom}}(G)} := \sum_{|\n|_1 = s} \left\|\partial^\n u\right\|^2_{L_2(G)}
\end{align*}
for an integer  $s $, and by
\begin{align*}
& \|u\|^2_{H^{s}(G)} := \|u\|^2_{H^{m}(G)} +\|u\|^2_{H^{s}_{\mathrm{hom}}(G)}, \\
& \|u\|^2_{H^{s}_{\mathrm{hom}}(G)} := \sum_{|\n|_1 = m} \int_G\int_G \frac{\left|(\partial^\n u)(X) - (\partial^\n u)(Y)\right|^2}{|X - Y|_2^{\mathbf{ N} + 1
}}\, dXdY
\end{align*}
for a half-integer $s =m+\frac12$. (Here $|\cdot|_q$ denotes the standard norm in $\ell^q$.) The space  $\overset{\circ}{H}{}^s(G)$, $s>\frac12$, is defined as the closure of $C^\infty_0(G)$ in $H^s$ norm which is equivalent on $\overset{\circ}{H}{}^s(G)$ to the homogeneous Sobolev norm.
The semi-norm $\|\cdot\|^2_{H^s_{\mathrm{hom}}(G)}$ possesses the homogeneity property
$\|u(R\cdot)\|_{H^s_{\mathrm{hom}}(G)} =  R^{s - \mathbf{N}/2} \|u\|_{H^{s}_{\mathrm{hom}}(RG)}$ for all $R > 0$. Our case of interest is $s=l=\frac\Nb{2}$; here the homogeneous semi-norm is invariant under dilations.  Although this semi-norm
is not, in general, invariant under rotations, it is easy to see that under rotations, this semi-norm transforms to an equivalent one, and there exist positive constants $m_\mathbf{N}, M_\mathbf{N}$ depending only
on $\mathbf{N}$ and such that
\begin{align}\label{invar}
m_\mathbf{N} \left\|u\big(\Zb(\cdot)\big)\right\|_{H^{l}_{\mathrm{hom}}(G)}  \le \|u\|_{H^l_{\mathrm{hom}}(\Zb(G))} \le
M_\mathbf{N} \left\|u\big(\Zb(\cdot)\big)\right\|_{H^{l}_{\mathrm{hom}}(G)}  , \\
\forall u \in H^{l}(G) \nonumber
\end{align}
for every affine transformation $\Zb$ of the form $X=\Zb(Y) = RUY + X_0$, with $X_0 \in \mathbb{R}^\mathbf{N}$, $R > 0$, and an orthogonal matrix  $U \in \mathrm{O}(\Nb).$

Recall that $\Psi(t)= (1 + t)\ln(1 + t) - t$ and  $\Phi(t)= e^t - 1 - t$ are complementary Orlicz functions.
\begin{lem}\label{meascor}{\rm  (cf.\;\cite[Corollary 11.8/2]{MazBook})}
Let $G\subset\mathbb{R}^{\mathbf{N}}$ be a bounded domain with Lipschitz boundary. If a positive Borel measure $\mu$ on $\overline{G}$ satisfies, for some $\a > 0,$
\begin{equation}\label{ball}
\mu(B(X, r)) \le Kr^{\a}\;,\;\;\forall X\in\overline{G} \;\;\; \textrm{and}\;\;\;  \forall r \in \left(0, \frac12\right)\,,
\end{equation}
then the inequality

\begin{equation}\label{Trace Embedding}
\|w^2\|^{\Phi, \mu}_{\overline{G}} \le A_1 \|w\|^2_{H^{l}(G)} , \ \ \ \forall w \in H^{l}(G)\cap C(\overline{G})
\end{equation}
holds with a constant $A_1=A_1(G, \a, K).$
\end{lem}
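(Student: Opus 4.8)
The plan is to deduce \eqref{Trace Embedding} from the classical trace/embedding inequality in Orlicz form, as in \cite[Corollary 11.8/2]{MazBook}, combined with the Ahlfors-type upper bound \eqref{ball}. First I would recall the scalar trace inequality: for a bounded Lipschitz domain $G\subset\R^{\Nb}$ and a positive Borel measure $\m$ on $\overline G$ satisfying the isoperimetric-type condition that there is $B$ with $\m(B(X,r))\le B\, \CapW_{H^l}(B(X,r))$ uniformly in $X\in\overline G$ and $r\in(0,\tfrac12)$, one has $\|v\|^{\Phi,\m}_{\overline G}\le A\|v\|_{H^l(G)}$ for all $v\in H^l(G)\cap C(\overline G)$ (this is the content of Maz'ya's result, the borderline case $2l=\Nb$ being exactly where the exponential Orlicz function $\Phi$ and the averaged norm appear). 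Applying this with $v=w^2$ is not quite right since $w^2$ is not controlled in $H^l$ by $w$; instead I would apply it with $v=w$ but to the Orlicz function matched to the square, i.e.\ use that $\|w^2\|^{\Phi,\m}_{\overline G}$ is, up to constants, comparable to $\big(\|w\|^{\Phi_2,\m}_{\overline G}\big)^2$ where $\Phi_2(t)=\Phi(t^2)=e^{t^2}-1-t^2$, and then invoke the Moser–Trudinger–type embedding $H^l(G)\hookrightarrow L^{\Phi_2}$ in the critical case $l=\Nb/2$. This is the standard route and is precisely what \cite[Ch.~11]{MazBook} provides.

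Second, I would verify that the hypothesis \eqref{ball}, namely $\m(B(X,r))\le K r^\a$ with $\a>0$, implies the capacitary condition needed above. The point is that in the critical case $2l=\Nb$ the $H^l$-capacity of a ball of radius $r$ satisfies $\CapW_{H^l}(B(X,r))\gtrsim |\log r|^{-(2l/\Nb)}\cdot(\text{something})$, more precisely $\CapW_{H^l}\big(B(X,r),B(X,\tfrac12)\big)\asymp (\log(1/r))^{1-\Nb/(2l)}$ — which degenerates — so one cannot compare with a fixed power of $r$ directly. Instead the correct statement (Maz'ya) is formulated with the averaged Orlicz norm on the right, and the measure condition that suffices is exactly a bound of the form $\m(B(X,r))\le K r^\a$ for \emph{some} $\a>0$: any positive power of $r$ beats every negative power of $\log(1/r)$ as $r\to0$, so $\m(B(X,r))=o\big(\CapW_{H^l}(B(X,r))\big)$ locally, uniformly in $X$. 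I would make this quantitative: cover $\overline G$ by finitely many balls, use \eqref{ball} together with the logarithmic lower bound for the $H^l$-capacity, and conclude that the relevant Maz'ya-type sufficient condition holds with a constant $B=B(\a,K,\Nb)$.

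Third, I would assemble the constant: the embedding constant $A$ depends on $G$ (through its Lipschitz character) and on $\Nb$, while the passage from the capacitary condition to the measure condition contributes the dependence on $\a$ and $K$; hence $A_1=A_1(G,\a,K)$ as claimed, with no dependence on anything else. One small technical point I would address is the restriction to $w\in H^l(G)\cap C(\overline G)$: this ensures $w^2$ is a bona fide continuous function on $\overline G$, so that the averaged norm $\|w^2\|^{\Phi,\m}_{\overline G}$ from \eqref{AvNorm} is well defined as a supremum over the appropriate dual Orlicz ball; the inequality then extends by density when needed later in the paper.

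\textbf{Main obstacle.} The genuine difficulty is not the density or bookkeeping but the fact that \eqref{Trace Embedding} is a \emph{critical-exponent} trace inequality: in the subcritical regime $2l<\Nb$ one would just use $H^l\hookrightarrow L^q$ with $q$ tied to $\a$, but at $2l=\Nb$ the target is the exponential Orlicz class and the sharp tool is Maz'ya's capacitary criterion rather than a naive Sobolev embedding. The crux is therefore to confirm that the polynomial smallness of $\m$ on small balls, \eqref{ball}, is comfortably stronger than the logarithmically-degenerate $H^l$-capacity of those balls, so that the Maz'ya sufficient condition is met with a controlled constant; once this comparison is in hand, the lemma follows from \cite[Corollary 11.8/2]{MazBook} essentially verbatim.
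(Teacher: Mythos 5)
Your route --- reduce to Maz'ya's capacitary criterion for the critical-case trace embedding into the exponential Orlicz class (applied to $w$ with the squared Orlicz gauge, equivalently to $w^2$ with $\Phi$) and verify the criterion by comparing the polynomial ball bound $\mu(B(X,r))\le Kr^{\a}$ with the logarithmically degenerate $H^{l}$-capacity of small balls --- is essentially the paper's own proof, which simply invokes \cite[Theorem 11.8]{MazBook} and observes that the verification is almost identical to \cite[Lemma 5.2]{KarSh}. One cosmetic slip: in the critical case $2l=\Nb$ the condenser capacity of $B(X,r)$ decays like $(\log(1/r))^{-1}$, not $(\log(1/r))^{1-\Nb/(2l)}$ (which would be constant), but this does not affect your argument since any positive power of $r$ still beats the logarithm uniformly in $X$.
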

\begin{proof}
The proof relies on \cite[Theorem 11.8]{MazBook} and is almost identical to that of \cite[Lemma 5.2]{KarSh}.
 \end{proof}
 Inequality \eqref{Trace Embedding} implies that the embedding ${H^{l}(G)}\cap C(\overline{G})$ into the exponential Orlicz space $L^{\Phi, \mu},$ defined initially on continuous functions, extends by continuity to the whole ${H^{l}(G)}.$ This continuation will be assumed already performed further on.
The boundedness of the trace operator $\G: H^{l}(G)\to L_2(\Mb,\mu),$ $f\mapsto f|_\Mb,$ used at least twice here,  follows from the embedding $L^{\Phi,\mu}(\Mb)\subset L_1(\Mb,\m)$ (recall that $\Mb$ is the support of the measure $\m$.)

We will also need the following version of the Poincar\'e inequality that can be traced back to S.L. Sobolev (a very detailed proof  can be found in
\cite[Theorem 2.4]{SZSol} for $G$ being a cube).
For every  bounded set $G\subset\mathbb{R}^{\mathbf{N}}$ with Lipschitz boundary
and every $s > 0$, there exists a constant $C_s(G) > 0$ such that
\begin{equation}\label{Poinc}
\|u\|_{H^{s}(G)} \le C_s(G) \|u\|_{H^{s}_{\mathrm{hom}}(G)}
\end{equation}
for all $u \in H^{s}(G)$ orthogonal in $L_2(G)$ to every polynomial of degree strictly less than $s$. We will denote
the optimal constant in \eqref{Poinc} with $s = \mathbf{N}/2$ by $C(G) := C_{\mathbf{N}/2}(G)$.

Our eigenvalue estimates rely on the following inequality.
\begin{lem}\label{Measlem4}Let $Q\subset \R^\Nb$ be a cube and $\mu=\mu_\Si$.
\begin{equation}\label{maz7}
\int_{\overline{Q}} V(X)|f(X)|^2d\mu(X) \le A_2  \|V\|^{(av,\Psi,\mu)}_{\overline{Q}} \|f\|_{H^{l}_{\mathrm{hom}}(Q)}^2
\end{equation}
for all $f\in H^l(Q)$ orthogonal in $L_2(Q)$ to every polynomial of degree strictly less than $l$, with constant $A_2$ depending only on $\Nb,$ the constants $c_1,c_2$, and the exponent $\a$ in \eqref{Ahlfors}.
\end{lem}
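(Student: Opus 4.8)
The plan is to reduce \eqref{maz7} to a single scalar inequality and then chain together the Poincar\'e inequality \eqref{Poinc}, the exponential-Orlicz trace estimate of Lemma~\ref{meascor}, and the H\"older inequality built into the definition \eqref{AvNorm} of the averaged norm, in the spirit of \cite{Sol94}. Since \eqref{maz7} is homogeneous of degree two in $f$, I would first normalise $\|f\|_{H^l_{\mathrm{hom}}(Q)}^2=1$; the claim then reads $\int_{\overline Q}V|f|^2\,d\mu_\Si\le A_2\,\|V\|^{(av,\Psi,\mu_\Si)}_{\overline Q}$, and by \eqref{AvNorm} it is enough to show that $A_2^{-1}|f|^2$ is an admissible competitor, i.e.
\[
\int_{\overline Q\cap\Si}\Phi\!\left(A_2^{-1}|f|^2\right)d\mu_\Si\ \le\ \mu_\Si(\overline Q\cap\Si).
\]
Thus everything reduces to one scalar estimate for the continuous function $|f|^2$ on $\overline Q$.

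Next I would remove the size and orientation of $Q$. Writing $Q=X_0+hQ_0$ with $Q_0$ a fixed reference cube and $h$ its side length, and $v(Y):=f(X_0+hY)$, the assumption $l=\Nb/2$ makes the homogeneous seminorm dilation invariant, so $\|v\|_{H^l_{\mathrm{hom}}(Q_0)}=\|f\|_{H^l_{\mathrm{hom}}(Q)}=1$ (for a rotated cube one additionally uses \eqref{invar}, at the price of the dimensional factors $m_\Nb,M_\Nb$); orthogonality to polynomials of degree $<l$ in $L_2$ is preserved; the averaged norm is dilation invariant, $\|V\|^{(av,\Psi,\mu_\Si)}_{\overline Q}=\|\tilde V\|^{(av,\Psi,\nu)}_{\overline{Q_0}}$ with $\tilde V(Y)=V(X_0+hY)$ and $\nu$ the image of $\mu_\Si|_{\overline Q}$ under $Y\mapsto X_0+hY$; and \eqref{Ahlfors} transforms covariantly, giving $\nu(B(Y,r))\le c_1h^d r^d$ for $r\in(0,\tfrac12)$ and $\nu(\overline{Q_0})=\mu_\Si(\overline Q)$.

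On the reference cube I would then run the main chain. The Poincar\'e inequality \eqref{Poinc} (applicable because $v$ is orthogonal to polynomials of degree $<l$) gives $\|v\|_{H^l(Q_0)}\le C(Q_0)$. Lemma~\ref{meascor}, applied on $Q_0$ to the measure $\nu$ (which satisfies \eqref{ball} with $K=c_1h^d$ and $\a=d$), gives $\|v^2\|^{\Phi,\nu}_{\overline{Q_0}}\le A_1(Q_0,d,K)\,\|v\|^2_{H^l(Q_0)}\le A_1(Q_0,d,K)\,C(Q_0)^2=:c$, that is $\int_{\overline{Q_0}}\Phi(c^{-1}v^2)\,d\nu\le1$. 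Finally, to pass from this Luxemburg-type bound (constraint $\le 1$) to the averaged one (constraint $\le\nu(\overline{Q_0})$), I would use that $t\mapsto\Phi(t)/t$ is nondecreasing for $\Phi(t)=e^t-1-t$, hence $\Phi(t/\be)\le\be^{-1}\Phi(t)$ for $\be\ge1$: with $\be:=\max\{1,\nu(\overline{Q_0})^{-1}\}$ one gets $\int_{\overline{Q_0}}\Phi\big((\be c)^{-1}v^2\big)\,d\nu\le\be^{-1}\le\nu(\overline{Q_0})$. So $A_2:=\be c$ does the job, and undoing the rescaling yields \eqref{maz7}.

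The hard part will be to make $A_2$ genuinely independent of $Q$ (its independence of $V$ is automatic from the above). After the rescaling, the constant of Lemma~\ref{meascor} grows like $A_1(Q_0,d,K)\sim K=c_1h^d$, whereas the corrective factor is $\be\sim\nu(\overline{Q_0})^{-1}=\mu_\Si(\overline Q)^{-1}$, so the two powers of $h$ cancel only when $\mu_\Si(\overline Q)\gtrsim h^d$. This is \emph{not} automatic for an arbitrary cube --- a concentrating critical-Sobolev bump sitting on a small patch of $\Si$ produces a logarithmic loss --- but it does hold, via the \emph{lower} Ahlfors bound in \eqref{Ahlfors}, for the cubes that actually occur in the covering of $\Si$ used to deduce Theorem~\ref{ThEstimate}: those whose closure contains a ball of radius comparable to $h$ centred at a point of $\Si$ (equivalently, over which the Lipschitz graph $\yb=\vf(\xb)$ fills a fixed proportion of $Q$). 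Carrying this compensation through carefully, while tracking how the constant of Lemma~\ref{meascor} depends on the ball-condition constant $K$, and thereby confirming that $A_2$ depends only on $\Nb$, $c_0$, $c_1$ and $\a=d$, is the technical core; the rest is bookkeeping along the lines of \cite{Sol94}.
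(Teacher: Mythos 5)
Your skeleton --- the duality reduction built into \eqref{AvNorm}, rescaling to a reference cube, and the chain Poincar\'e \eqref{Poinc} plus Lemma~\ref{meascor} --- is the route of \cite[Lemmas 5.3--5.4]{KarSh}, to which the paper delegates the proof. The gap sits exactly at the point you flag as the technical core: your compensation needs the constant of Lemma~\ref{meascor} to satisfy $A_1(Q_0,\a,K)\lesssim K$, and this is false. Exponential Orlicz (Luxemburg) norms are only logarithmically sensitive to multiplying the measure by a constant: take $\nu_K=K\m_0$ with $\m_0$ a fixed $\a$-AR measure meeting the unit cube, and let $w_\epsilon$ be the normalized truncated-log bump of critical order centred at a point of $\supp\m_0$, so that $w_\epsilon^2\approx c^{-1}\log(1/\epsilon)$ on the $\epsilon$-ball around the centre while $\|w_\epsilon\|_{H^{l}(Q_0)}\approx 1$. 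Then $\int\Phi(w_\epsilon^2/\la)\,d\nu_K\gtrsim K\,\epsilon^{\a}\,\epsilon^{-1/(c\la)}\to\infty$ as $\epsilon\to0$ whenever $\la<1/(c\a)$, for every fixed $K>0$; hence $\|w_\epsilon^2\|^{\Phi,\nu_K}_{\overline{Q_0}}$ stays bounded away from $0$ uniformly in $K$, i.e. $A_1(K)\not\to0$, let alone linearly. Consequently your factor $\be\sim\nu(\overline{Q_0})^{-1}$ is not cancelled, and your chain yields $A_2\sim\m(\overline Q)^{-1}\gtrsim h^{-\a}$ even for the well-placed cubes you single out. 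The way to close the argument --- and what the paper's dilation remark and \cite[Lemma 5.4]{KarSh} in essence do --- is to normalize \emph{before} invoking the trace estimate: set $\hat\nu:=\nu/\nu(\overline{Q_0})$; for a cube of side $h$ centred at a point of $\supp\m$, the lower bound in \eqref{Ahlfors} gives $\nu(\overline{Q_0})=\m(\overline Q)\ge c_0(h/2)^{\a}$, so $\hat\nu$ satisfies \eqref{ball} with a constant $K'\le C(\a)c_1/c_0$ independent of $h$; Lemma~\ref{meascor} applied to $\hat\nu$ then has a uniform constant, and since the constraint in \eqref{AvNorm} is exactly $\int\Phi(|g|)\,d\hat\nu\le1$ after this normalization, $g=|f|^2/\la$ with $\la=A_1(K')C(Q_0)^2\|f\|^2_{H^l_{\mathrm{hom}}}$ is directly admissible --- the $\Phi(t)/t$ conversion, and any tracking of the $K$-dependence of $A_1$, disappear.

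Your structural caveat about arbitrary cubes is nonetheless well taken, and it is glossed over by the paper's proof-by-citation: some lower bound $\m(\overline Q)\gtrsim h^{\a}$ is genuinely indispensable. For $V\equiv1$ one has $\|V\|^{(av,\Psi,\m)}_{\overline Q}=\Phi^{-1}(1)\,\m(\overline Q\cap\Si)$ by Jensen's inequality, and if $\overline Q$ merely clips $\Si$ near a corner at scale $\epsilon\ll h$, the normalized log-bump peaked at that corner makes the left-hand side of \eqref{maz7} of order $\m(\overline Q)\log^2(1/\epsilon)$ against a right-hand side of order $A_2\,\m(\overline Q)\log(1/\epsilon)$, so no cube-independent $A_2$ exists for such configurations; the estimate survives precisely for the Besicovitch cubes centred at points of $\Mb$ (capped at the size of $\Qb$) that are used in Theorem~\ref{BSESt}. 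But identifying this restriction is not the same as proving the lemma under it: as written, the decisive quantitative step of your proposal rests on the unproved and incorrect assertion $A_1\sim K$, so the proof is incomplete until it is replaced by the normalization argument above.
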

\begin{proof} If $Q=Q_1$ is a unit cube, this is an immediate generalization of
 the proof of \cite[Lemma 5.3]{KarSh}. For a cube of an arbitrary size, one should make a dilation (compression)  in \eqref{maz7}  of $Q$ to $Q_1$. Under this transformation, the homogeneous norm in
 \eqref{maz7} is invariant. As for the norm $\|V\|^{(av,\Psi,\mu)}_{\overline{Q}}$, it is not invariant under dilations, but is `almost' invariant in the sense that the dilated (compressed) norm is majorated by the initial one, with constant depending on $c_0,c_1$ and $\alpha$ in \eqref{Ahlfors}, independently of the dilation coefficient. Estimate  \eqref{maz7} is proved in \cite[Lemma 5.4]{KarSh}, and the proof does not depend on dimension  and uses nothing but the Ahlfors regularity of $\m$, so it applies to our case without changes.
\end{proof}

\subsection{Coverings and piecewise polynomial approximations}
The construction to follow is an adaptation of the method developed by M.Z. Solomyak in \cite{Sol94}. It stems from the approach initiated by M.Sh. Birman, M.Z. Solomyak, with a contribution by G.Rozenblum, in the late 60's--early 70's, see the nice exposition in \cite{BirSolLect}. Recently, this construction was used again for obtaining eigenvalue estimates for the weighted
(poly-)harmonic operator in the critical case, see \cite{KarSh}, \cite{SZSol}. We present the main idea and the structure of the proof first, and then fill in the required details.
\begin{thm}\label{BSESt} Let $\m$ be a Borel measure in $G\subset \R^\Nb$ satisfying \eqref{Ahlfors} with $\a>0$ in a bounded domain $G \subset \R^{\Nb}$ and $V\in L^{\Psi}(\Mb,\m)$. Denote by $\overset{\circ}\Tb(V,\m)$ the operator defined  in $\overset{\circ}{H}{}^{l}(G)$ by the quadratic form $\int V(X)|f(X)|^2d\m$, $f\in \overset{\circ}{H}{}^{l}(G)$, with the norm $\|f\|_{H^{l}_{\mathrm{hom}}(G)}$ in the latter space. Then
\begin{equation}\label{BS.estim}n_{\pm}(\la,\overset{\circ}\Tb(V,\m) )\le C \la^{-1}\|V\|^{(av,\Psi,\mu)}_{G},
\end{equation}
with a constant $C$ independent of $V$.
\end{thm}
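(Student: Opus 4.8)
\emph{Plan of the proof.} The intention is to run the Birman--Solomyak variational scheme in the form exposed by Solomyak in \cite{Sol94}, with the adjustments forced by the singular, Ahlfors regular measure $\m$. As a preliminary step, since an $\a$-AR measure with $\a>0$ carries no point masses, I would apply Theorem \ref{mm} to fix an orthonormal basis of $\R^\Nb$ in which $\m$ assigns zero mass to every hyperplane orthogonal to a basis vector; changing to these coordinates turns $\|\cdot\|_{H^l_{\mathrm{hom}}}$ into an equivalent seminorm by \eqref{invar} and affects only constants. Enclosing $\overline G$ in a cube $Q^*$ and extending functions of $\overset{\circ}{H}{}^{l}(G)$ by zero (a bounded operation into $H^l(Q^*)$, which even preserves the homogeneous seminorm when $l$ is an integer), one may assume $G=Q^*$ is a cube; in the chosen coordinates every dyadic subcube of $Q^*$ then has a $\m$-null boundary, so $\m$ splits additively over any dyadic partition.

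Next, for a parameter $\de>0$ I would run a stopping-time construction: subdivide $Q^*$ dyadically, stopping on a cube $Q$ as soon as $\|V\|^{(av,\Psi,\m)}_{\overline Q}\le\de$. The covering lemma of \cite{Sol94}, in the Ahlfors regular version of \cite{KarSh, SZSol}, guarantees that this yields a \emph{finite} partition $Q^*=\bigcup_{j=1}^{N(\de)}Q_j$ with $\|V\|^{(av,\Psi,\m)}_{\overline{Q_j}}\le\de$ for every $j$, together with the key counting bound
\[
N(\de)\ \le\ C_0\,\de^{-1}\,\|V\|^{(av,\Psi,\m)}_{Q^*}+C_0 .
\]
I expect this covering lemma to be the main obstacle: the estimate $N(\de)\lesssim\de^{-1}\|V\|^{(av,\Psi,\m)}$ is precisely where the structure of $L^\Psi$, $\Psi(t)=(1+t)\log(1+t)-t$, and the scale sensitivity of the averaged Orlicz norm must be exploited --- through the quasi-subadditivity of $\|V\|^{(av,\Psi,\m)}_{(\cdot)}$ under dyadic splitting --- and it has to be carried through for an Ahlfors regular $\m$, using the upper bound $\m(B(X,r))\le c_1r^\a$, rather than for the Lebesgue measure of \cite{Sol94}.

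On each $Q_j$ I would then carry out the piecewise-polynomial reduction. Let $P_j$ be the $L_2(Q_j)$-orthogonal projection onto polynomials of degree $<l$, a space of dimension $\vk_\Nb$ depending only on $\Nb$ and $l$, and write $f=P_jf+g_j$. Since $g_j$ is orthogonal to those polynomials and $\|P_jf\|_{H^l_{\mathrm{hom}}(Q_j)}=0$, Lemma \ref{Measlem4} applied to $|V|$ (whose averaged norm coincides with that of $V$) gives $\int_{\overline{Q_j}}|V|\,|g_j|^2\,d\m\le A_2\,\|V\|^{(av,\Psi,\m)}_{\overline{Q_j}}\,\|f\|^2_{H^l_{\mathrm{hom}}(Q_j)}\le A_2\de\,\|f\|^2_{H^l_{\mathrm{hom}}(Q_j)}$. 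Using $|f|^2\le 2|P_jf|^2+2|g_j|^2$ and summing over $j$ (the squared homogeneous seminorm is superadditive over the partition, and $\m$ does not charge the cube faces), I obtain
\[
\int_{\overline G}V\,|f|^2\,d\m\ \le\ 2\,\Bb[f]+2A_2\de\,\|f\|^2_{H^l_{\mathrm{hom}}(G)},\qquad
\Bb[f]:=\sum_{j=1}^{N(\de)}\int_{\overline{Q_j}}|V|\,|P_jf|^2\,d\m .
\]
The nonnegative form $\Bb$ factors through $\bigoplus_j P_j\big(H^l(Q_j)\big)$, of dimension $\le\vk_\Nb N(\de)$, so the self-adjoint operator it defines in $\overset{\circ}{H}{}^{l}(G)$ has rank at most $\vk_\Nb N(\de)$.

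Finally, I would assemble these estimates in the Hilbert space $\overset{\circ}{H}{}^{l}(G)$ with the inner product $(\cdot,\cdot)_{H^l_{\mathrm{hom}}}$, a norm equivalent to the full $H^l$ norm there. Monotonicity of the counting function together with the Ky Fan/Weyl inequality for sums of quadratic forms yields, for any splitting $s_1+s_2=\la$,
\[
n_\pm\big(\la,\overset{\circ}{\Tb}(V,\m)\big)\ \le\ n_+\big(s_1,2\Bb\big)+n_+\big(s_2,\,2A_2\de\,\|\cdot\|^2_{H^l_{\mathrm{hom}}}\big).
\]
Taking $s_2=2A_2\de$ annihilates the second term, while the first is $\le\vk_\Nb N(\de)$ for every $s_1>0$ since $\Bb$ has finite rank; hence $n_\pm(\la,\overset{\circ}{\Tb}(V,\m))\le\vk_\Nb N(\de)$ whenever $\la>2A_2\de$. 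Setting $\de:=\la/(3A_2)$ and inserting the counting bound above, I would conclude $n_\pm(\la,\overset{\circ}{\Tb}(V,\m))\le C\,\la^{-1}\|V\|^{(av,\Psi,\m)}_{G}$ for all small $\la>0$ (and trivially for large $\la$, where the left-hand side vanishes), which is \eqref{BS.estim}; the bound for $n_-$ follows by applying the $n_+$ estimate to $-V$, whose averaged norm equals that of $V$.
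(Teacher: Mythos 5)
Your overall skeleton is the same as the paper's: reduce to a cube, use the projections onto polynomials of degree $<l$ on the elements of a covering/partition to produce a finite-rank (equivalently, finite-codimension) piece, control the remainder on each element by Lemma \ref{Measlem4} together with semiadditivity of $\Jb(Q):=\|V\|^{(av,\Psi,\m)}_{\overline Q}$, and finish by the variational principle. The genuine gap is exactly the step you flag as the main obstacle: the cardinality bound $N(\de)\le C_0\de^{-1}\|V\|^{(av,\Psi,\m)}_{Q^*}+C_0$ for the family produced by your dyadic stopping rule. This does not follow from the properties you have at hand, and the references you invoke do not supply it for that construction. The only structural property of $\Jb$ available is upper semiadditivity over \emph{disjoint} cubes; your stopped cubes satisfy only $\Jb(Q_j)\le\de$ (possibly $\Jb(Q_j)$ is negligible), so they cannot be counted by semiadditivity, while their parents, which do satisfy $\Jb>\de$, are \emph{nested}, not disjoint, so semiadditivity gives no bound on their number either. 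For a general nonnegative semiadditive (even additive) set function, a long chain of nested heavy dyadic cubes around a point where $V\,d\m$ concentrates makes the number of stopped cubes uncontrolled by $\de^{-1}\Jb(Q^*)$; whether the specific $L\log L$-averaged norm rules this out would need an argument you do not give. The partition/covering lemmas of \cite{Sol94}, \cite{KarSh}, \cite{SZSol} are proved for differently built families (pseudocube partitions obtained by merging, or coverings whose elements carry a prescribed value of the set function), so they cannot be quoted verbatim for your partition.

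The paper's resolution of precisely this point is what is missing from your argument. Theorem \ref{mm} is used not merely to make dyadic faces $\m$-null, but to ensure that $t\mapsto\Jb(Q_X(t))$ is \emph{continuous} (and tends to $0$ as $t\to0$, by \cite{KarSh}, \cite{SZSol}); hence for each $X$ one can choose a cube centered at $X$ on which $\Jb$ is \emph{exactly equal} to the threshold $\la\kb^{-1}$. The Besicovitch covering lemma then extracts a finite subcover of multiplicity $\ka(\Nb)$ which splits into $\nb_0(\Nb)$ disjoint families, and because every cube of the cover carries the exact value $\la\kb^{-1}$, semiadditivity applied to each disjoint family gives $|\U|\le \nb_0\kb A_2\la^{-1}\|V\|^{(av,\Psi,\m)}_{G}$; the bounded multiplicity replaces your disjointness when summing the local seminorms. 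If you substitute this exact-threshold Besicovitch construction (or the Birman--Solomyak pseudocube partition lemma, after checking Lemma \ref{Measlem4} and the Poincar\'e inequality on pseudocubes) for your stopping-time step, the rest of your argument goes through; note also that to absorb your additive constant $C_0$ for large $\la$ you need the a priori form bound $\tb[f]\le C\|V\|^{(av,\Psi,\m)}_{G}\|f\|^2_{H^{l}(G)}$ coming from Lemma \ref{meascor}.
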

\begin{proof} For $\a=\Nb$ and $\m$ being the Lebesgue measure, this result has been proved   in \cite{Sol94} for even $\Nb$ and in \cite{SZSol} for odd $\Nb$.
So, let $\a<\Nb.$ It suffices to consider the case $V\ge0.$ We use one of the possible formulations of the variational principle for compact self-adjoint operators. If $\Tb\ge0$ is such an operator on a Hilbert space $\Hc$, with the quadratic form $\tb[f]$, then for the eigenvalue counting function $n(\la,\Tb)$,
\begin{equation}\label{variation}
    n(\la,\Tb)= \min\codim\{\Yc\subset\Hc:  \tb[f]\le \la\|f\|^2, f\in\Yc\}.
\end{equation}
Here the codimension $\codim{\Yc}$ is the number of linearly independent functionals that have $\Yc$ as their common null space. Equality \eqref{variation} hints at how to prove eigenvalue estimates. Let us, for some $\la>0$, construct  \emph{some} subspace $\Yc=\Yc(\la)$ on which the inequality in \eqref{variation} holds. Then the quantity $n(\la,\Tb)$ is not greater than the codimension of the subspace we constructed, $n(\la,\Tb)\le \codim \Yc$. The more efficient we are in constructing $\Yc$, the sharper the estimate is.

Let $\Hc$ be the Sobolev space $\overset{\circ}{H}{}^l(G).$ In constructing the subspace $\Yc$, we take some finite covering $\U=\U(\la)$ of $G$ by cubes. With each cube $Q\in\U$, we associate a set of functionals, the $L_2(Q)$-scalar products with polynomials of degree less than $l$. Thus, with each cube, $\db(\Nb,l)$, the dimension of the space of polynomials of interest,
functionals are associated, altogether $|\U|\db(\Nb,l)$ of them, so, the common null space $\Yc=\Yc[\U]$ of these functionals has just this codimension (or less).
 Let $\La(\U)$ be defined as $\sup_{f\in \Yc[\U]}\tb[f]/{\|f\|^2}.$ By the variational principle \eqref{variation},
 \begin{equation}\label{ineq.codim}
    n(\La(\U),\Tb)\le |\U|\db(\Nb,l).
 \end{equation}
 So,  given $\la>0$, we need to construct a covering $\U=\U(\la)$ such that $\La(\U)\le \la$, and at the same time, $|\U|$ should be under control, and its value will produce the required estimate via \eqref{ineq.codim}.

  In our case, the quadratic form defining our operator is $\tb[f]=\int_{G} V(X)|f(X)|^2 d\mu$, with $V\ge0$ (we identify measure $\m$ with its natural extension by zero to the whole of $G$).
 The same integral over a cube $Q$ will be denoted by $\tb_{Q}[f]$. If $f\in \Yc[\U]$, then on each cube $Q$ of the covering $\U$, the restriction $f_{Q}$ of $f$ to $Q$ is orthogonal to polynomials of degree less than $l$. Suppose that for \emph{such} functions $f_Q$   an estimate of the form
 \begin{equation}\label{Est.on Q}
    \tb_{Q}[f]=\tb[f_Q]\le \Jb(Q)\|f\|^2_{H^{l}_{\mathrm{hom}}(Q)}, \quad f\in\Yc[\U],
 \end{equation}
holds, with some function of cubes $\Jb(Q)$. We need further  the function $\Jb$ to be \emph{upper semiadditive}; this means that if $Q_\io$ is a family of disjoint cubes, all of them inside a cube $Q^0$, then
\begin{equation}\label{semiadd}
    \sum \Jb(Q_\io)\le \Jb(Q^0).
\end{equation}
   We sum \eqref{Est.on Q} over all cubes in the covering $\U$ to obtain
 \begin{gather}\label{sumestim}
   \tb [f]\le\sum_{Q\in\U}\tb_{Q}[f]=\sum_{Q\in\U}\tb[f_{Q}]\le \sum_{Q\in\U}\Jb(Q)\|f\|^2_{H^{l}_{\mathrm{hom}}(Q)}\le\\\nonumber
   \le \max_{Q\in\U}\Jb(Q)\sum_{Q\in\U}\|f\|^2_{H^{l}_{\mathrm{hom}}(Q)}.
\end{gather}
 Now, suppose  that the covering $\U$ has a controlled finite multiplicity, i.e. every point in $\Mb$ is covered by no more than $\ka=\ka(\Nb)$ different cubes in $\U$.  Then the sum on right-hand side in \eqref{sumestim} can be majorated by $\ka \|f\|^2_{H^{l}_{\mathrm{hom}}(G)}$, which gives us
 \begin{equation}\label{aftersum}
     \tb_{Q}[f]\le \ka\max_{Q\in\U}\Jb(Q)\|f\|^2_{H^{l}_{\mathrm{hom}}(G)}.
 \end{equation}
 We finally arrive at the estimate
\begin{equation}\label{La Estim}
    \La{(\U)}\le \ka  \max_{Q\in\U}\Jb(Q).
\end{equation}
So, we reach our aim,  the estimate \eqref{BS.estim},  as soon as we construct the covering $\U$ such that
\begin{equation}\label{aim}
    (1)\ \ \ka  \max_{Q\in\U}\Jb(Q)<\la \quad  \mbox{ and } \quad (2)\ \ |\U|\db(\Nb,l)\le C\la^{-1}\|V\|^{(av,\Psi,\mu)}_{G}.
\end{equation}

 We set $\Jb(Q)=A_2\|V\|^{(av,\Psi,\mu)}_{ Q}$ with $A_2$ being the constant in \eqref{maz7}.  By Lemma \ref{Measlem4}, the required inequality \eqref{Est.on Q} is satisfied for this particular choice of $\Jb$. Next, this function $\Jb$
is upper semi-additive in the sense of \eqref{semiadd}.  This property of the averaged Orlicz norm  with respect to the Lebesgue measure was established in \cite[Lemma 3]{Sol94} and then extended to AR-measures,  see  Lemma 2.8 in \cite{KarSh}.

Now we construct the covering $\U(\la).$ First, by our Theorem \ref{mm},  there exists a cube $Q_0$ such that for every cube in $\R^\Nb$ with edges parallel to the ones of $Q_0$ (we call such cubes parallel to $Q_0$), its faces have zero
$\mu$-measure. We fix such a cube $Q_0$ and in the future \emph{all} cubes under our consideration will be parallel to $Q_0$. Consider a `large' cube $\Qb$ such that the cube concentric with $\Qb$ and with three times shorter edges still contains $G$ inside. We replace in  the  eigenvalue problem the domain $G$ by this larger cube $\Qb$. By the usual variational principle, any eigenvalue estimate in $\Qb$ implies automatically the same estimate for the initial problem in $G$.

For any point $X\in G$, we consider the family of cubes $Q_X(t)$, $0<t<\infty$, of size $t$ centered at $X$. By our choice of $Q_0$, the function $\m( Q_X(t))$ is continuous. Moreover, the function $\rho_X(t)=\Jb(Q_X(t))$ is a continuous function of $t$  and it tends to zero as $t\to 0$. The proof of this, rather elementary, fact is presented in \cite{KarSh}
for the case $\Nb=2, d=1$, and a very detailed proof for $d=\Nb$ is included in \cite{SZSol}. Both proofs carry over to our case automatically, without any modifications, since they are dimension-independent and it is only the continuity of $\m(Q_X(t))$ that is used there.

The function $\rho_{X}(t)$ stabilizes for  large $t$ to $\rho_{\infty}$, when  $Q_X(t)\supset \Qb$.
With some constant $\kb$, to be determined later, and $\la<\kb\rho_{\infty}$, we find  by continuity a value of $t=t(X)$ such that $\rho_X(t(X))=\la\kb^{-1}$.
The set of all  cubes $\{Q_{X}(t(X)), X\in G\}$ forms a covering of $\Mb$, and by the Besicovitch    covering lemma (see, e.g., \cite[Ch. 1, Theorem 1.1]{Guz})
one can find a finite sub-covering $\U=\U(\la)$ of finite multiplicity $\ka=\ka(\Nb)$; moreover, this covering can be split into finitely many  families, $\U_\nb, \nb\le\nb_0$, with $\nb_0$ depending only on the dimension $\Nb$, so that the cubes in each family are disjoint. This  will be the covering we are looking for.

In order to estimate the quantity of cubes in $\U$, we use the fact that the function $\Jb$ is upper semi-additive.  Therefore, for each of the families $\U_{\nb}$,
 \begin{equation*}
|\U_{\nb}|\kb^{-1}\la = \sum_{Q\in\U_{\nb}} \Jb(Q)\le \Jb(\Qb)=A_2\|V\|^{(av, \Psi,\mu)}_{G},\quad \nb=1,\dots,\nb_0,
 \end{equation*}
 and hence
 \begin{equation}\label{Theta}
    |\U|=\sum |\U_{\nb}|\le  \nb_0 \la^{-1} \kb A_2 \|V\|^{(av, \Psi,\mu)}_{G}.
 \end{equation}

Now we can choose the constant $\kb$ and arrive at the  required estimate.
Indeed, by \eqref{ineq.codim}, \eqref{La Estim},
\begin{equation}\label{subfinalLapl}
 n(\ka \la\kb^{-1},\Tb)\le \la^{-1}A_2 \nb_0 \kb \db(\Nb,l)\|V\|^{(av, \Psi, \mu)}_{G}.
\end{equation}
Taking $\kb=\ka$, we obtain estimate \eqref{BS.estim}.
\end{proof}
The approach, just presented, was called `piecewise polynomial approximation' by its authors. It measures how fast a function in the Sobolev space can be approximated in the weighted $L_2$ norm by functions that are polynomial on  cubes in the covering $\U.$
\subsection{Eigenvalue estimates}\label{EigEst}
Now we can conclude the \emph{ Proof}  of Theorem \ref{thm.eig.ahlf}.
Fix a bounded domain $G=\Om'\subset \Om $ with smooth boundary. We consider first the case when $\AF=\AF_0=(-\Delta_\Dc+1)^{-l/2}$, where  $(-\Delta_\Dc)$
 is the Laplace operator in $G$ with the Dirichlet boundary conditions. Having proved the eigenvalue estimate for this case,
  we will then use a simple argument to justify the required estimate for
a general operator $\AF$.

Consider the quadratic form $\Fb_V[u,\m, \AF_0]$, $u\in L_2(G)$. Denote $f=\AF_0 u$, so that $u=(-\Delta_\Dc+1)^{l/2}f,\ f\in\overset{\circ}H{}^{l}(G)$. The quadratic form  $\Fb_V[u,\m,\AF_0]$ is thus transformed to $\Fb_V^{(l)}[f]=\int_{\Mb}V|f|^2 d\m,\, f\in \overset{\circ}H{}^{l}(G)$. The operator $\overset{\circ}\Tb(V,\m)$ defined by the quadratic form $\Fb_V^{(l)}$ on
the Hilbert space $\overset{\circ}H{}^{l}(G)$ is exactly the operator considered in  Theorem \ref{BSESt}. Due to our substitution, $u=(-\Delta_\Dc+1)^{l/2}f$,
the operator $\overset{\circ}\Tb(V,\m)$ on $\overset{\circ}H{}^{l}(G)$ is similar to the  operator $\Tb(V,\Si, \AF_0)$ on $L_2(G)$, therefore they have the same spectrum, and the required estimate \eqref{main estmate} for this case follows. Note that the choice of $G=\Om'\subset\Om$ is arbitrary and may influence only the constant  in \eqref{main estmate}.


Now we pass to the general case, again for $V\ge0.$ As explained in Sections \ref{Setting} and \ref{Reductions}, we may assume that
the pseudodifferential operator $\AF$ contains cut-offs to $\Om'$.
 We denote by $\G$ the operator of restriction of functions in $H^l(\Om')$ to $\Mb$; this is a bounded operator
$\G: H^l(\Om)\to L_2(\Mb,\m)$ by Lemma \ref{meascor}. We set $|V|=W^2$, $W\ge0$. Then the quadratic form $\Fb_{|V|,\AF}[u]$ can be represented as follows
\begin{equation}\label{QuadrForm}
    \Fb_{V,\AF}[u] =\langle W\G\AF u, W\G\AF u\rangle_{L_2(\Mb,\m)}=\langle(W\G\AF)^{*}(W\G\AF) u,u\rangle_{L_2(\Om')}.
\end{equation}
Therefore, $\Tb(|V|,\m,\AF)=\AF^*\G^* |V|\G\AF$. Thus, compared with the operator $\Tb(|V|,\m,\AF_0)$, we have
\begin{equation}\label{GeneralOperator}
    \Tb(|V|,\m,\AF)=(\AF_0^{-1}\AF)^*\Tb(|V|,\m,\AF_0)(\AF_0^{-1}\AF).
\end{equation}
Since $\AF$ is a pseudodifferential operator of order $-l$, $\AF_0^{-1}\AF$ is a bounded operator on $L_2(\Om')$, together with  $(\AF_0^{-1}\AF)^*$.
Hence the eigenvalue estimate for $\Tb(|V|,\m,\AF_0)$, already justified, is preserved after multiplication by bounded operators.

\begin{proof}[Proof of Theorem \ref{ThEstimate}] It follows immediately from Theorem \ref{thm.eig.ahlf} since the measure $\mu_{\Si}$ on a compact Lipschitz  surface $\Si$ of dimension $d$ satisfies \eqref{Ahlfors} with $\a=d.$
\end{proof}

\section{Approximation of the weight} To perform the last reduction, we use systematically the asymptotic perturbation lemma  by M.Sh. Birman and M.Z. Solomyak (see, e.g., Lemma 1.5 in \cite{BS} or Lemma 6.1 in \cite{RT1}). By this fundamental lemma, if for an operator $\Tb$, there exist a family of approximating operators $\Tb_{\ve}, \, 0<\ve<\ve_0,$ such that for their eigenvalues  the asymptotic formula $n_{\pm}(\la,\Tb_{\ve} )\sim A_{\pm,\ve}\la^{-q}$ is known and for the difference $\Tb'_{\ve}=\Tb-\Tb_{\ve}$ the eigenvalue estimate $\limsup\la^{q}n(\la,\Tb'_{\ve})\le \ve$
 is proved, then the asymptotic formula $n_{\pm}(\la,\Tb)\sim A_{\pm}\la^{-q}$ holds with coefficients $A_{\pm}=\lim A_{\pm,\ve}$. In our case, the estimates will be provided by Theorem \ref{ThEstimate}. So, let $V\in L^{\Psi}(\Si)$. We approximate $V$ by sufficiently regular functions.
\begin{lem}\label{Lem.App.1} For any $\ve>0$, there exists $V_{\ve}\in C_0^{\infty}(\Om)$ such that $\|V-V_\ve\|^{(av,\Psi)}_{\Si}<\ve$.
\end{lem}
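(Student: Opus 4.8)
The plan is to approximate $V\in L^\Psi(\Si)$ in the averaged norm $\|\cdot\|^{(av,\Psi)}_\Si$ by smooth functions on $\Om$ in two stages: first replace $V$ by a bounded function (truncation), then mollify and cut off to land in $C_0^\infty(\Om)$. The key technical fact driving everything is that the averaged Orlicz norm is dominated by the Luxemburg/Orlicz norm on the whole surface (since $\|W\|^{(av,\Psi)}_\Si$ is, up to a constant depending only on $\m_\Si(\Si)$, the normalized Orlicz norm of $W$ over $\Si$ with respect to $\m_\Si$), so that convergence in $L^\Psi(\Si,\m_\Si)$ implies convergence in the averaged norm. I would therefore reduce the lemma to: (i) smooth functions on $\Si$ are dense in $L^\Psi(\Si,\m_\Si)$, and (ii) a function smooth on $\Si$ can be extended/approximated by a function in $C_0^\infty(\Om)$ whose restriction to $\Si$ is close in $L^\Psi(\Si)$.

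\textbf{Step 1 (truncation).} Since $\Psi$ satisfies the $\Delta_2$ condition, the simple functions (equivalently, the bounded measurable functions) are dense in $L^\Psi(\Si,\m_\Si)$; concretely, if $V_N := \max(-N,\min(V,N))$ then $\Psi(|V-V_N|)\to 0$ pointwise and is dominated by $\Psi(|V|)\in L^1$, so by dominated convergence $\|V-V_N\|_{L^\Psi(\Si)}\to 0$, hence $\|V-V_N\|^{(av,\Psi)}_\Si\to 0$. Pick $N$ with $\|V-V_N\|^{(av,\Psi)}_\Si<\ve/2$.

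\textbf{Step 2 (mollification on the surface, then extension).} Now $V_N\in L^\infty(\Si)\subset L^p(\Si,\m_\Si)$ for every $p<\infty$. Working in a single coordinate chart where $\Si$ is a Lipschitz graph $\yb=\vf(\xb)$ and $d\m_\Si=\s(\xb)\,d\xb$ with $\s$ bounded above and below, push $V_N$ down to a bounded function $\widetilde V_N(\xb)$ on a bounded piece of $\R^d$, mollify it to get $g_\e\in C_0^\infty(\R^d)$ with $\|g_\e-\widetilde V_N\|_{L^2}$ small and $\|g_\e\|_{L^\infty}\le\|\widetilde V_N\|_{L^\infty}$; then $h(X):=g_\e(\xb)$ (independent of the normal variables) is smooth on $\R^\Nb$, and its restriction to $\Si$ is $g_\e(\xb)$, which is $L^2(\Si)$-close to $V_N$ and uniformly bounded. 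Finally multiply by a cutoff $\om\in C_0^\infty(\Om)$ equal to $1$ on a neighborhood of $\Si$ (possible since $\Si\Subset\Om'\Subset\Om$) to land in $C_0^\infty(\Om)$ without changing the restriction to $\Si$. Gluing the finitely many charts with a partition of unity subordinate to them, we obtain $V_\ve\in C_0^\infty(\Om)$ with $V_\ve|_\Si$ arbitrarily $L^2(\Si,\m_\Si)$-close to $V_N$ while staying bounded by a fixed constant $M=M(N)$. Since the family $\{V_\ve|_\Si-V_N\}$ is uniformly bounded by $2M$ and converges to $0$ in $L^2$, hence (along a subsequence) $\m_\Si$-a.e., and $\Psi(|V_\ve|_\Si-V_N|)\le \Psi(2M)$ is integrable over the finite-measure set $\Si$, dominated convergence gives $\|V_\ve|_\Si-V_N\|_{L^\Psi(\Si)}\to 0$, hence $\|V_N-V_\ve\|^{(av,\Psi)}_\Si<\ve/2$ for $\e$ small. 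Combining the two steps via the triangle inequality for $\|\cdot\|^{(av,\Psi)}_\Si$ completes the proof.

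\textbf{The main obstacle} is purely bookkeeping about the averaged norm: one must be sure that $\|\cdot\|^{(av,\Psi)}_\Si$ is genuinely controlled by the ordinary Orlicz norm over all of $\Si$ (so that the soft $L^\Psi$-density argument applies), and that the Lipschitz change of coordinates does not spoil the Orlicz estimates — but since $\s$ is bounded above and below on each chart and only finitely many charts are needed (compactness of $\Si$), the transfer between the intrinsic surface measure and Lebesgue measure on the graph parameter costs only a fixed multiplicative constant. No dimension-dependent subtlety arises; all the work is the standard two-step truncate-and-mollify scheme adapted to an Orlicz target space with the $\Delta_2$ property.
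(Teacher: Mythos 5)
Your proposal is correct and follows essentially the same route as the paper: truncate $V$, pass to a function of the chart variable $\xb$ only (constant along the normal directions), mollify in $\xb$, and cut off, with the averaged norm controlled through an $L^2$/Orlicz comparison on the chart. The only difference is bookkeeping: you funnel everything through the Luxemburg norm, the $\Delta_2$ property of $\Psi$ and dominated convergence, whereas the paper bounds $\|\cdot\|^{(av,\Psi)}_{\Si}$ directly by the $L_2(\Dc)$ norm of the (bounded) difference — both are valid.
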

\begin{proof}  For $d=\Nb$ this is a well known statement about the density of smooth functions. So, let $d<\Nb$.  As usual, we can assume that the surface $\Si$ is covered by
one local chart $\Si: \yb=\vf(\xb),\ \xb\in\Dc\subset \R^{d}$. First, we truncate the function $V$ at some level, i.e. we set
$$V'_\ve(X)=V(X) \ \mbox{ if } \  |V(X)|\le N,\quad V'_\ve(X)=N\sign{V(X)}  \ \mbox{ otherwise} .$$
 The function $V_\ve'$ is bounded and, since $V\in L^{\Psi}(\Si),$ the cut-off level $N$ can be chosen so that $\|V-V_\ve'\|^{(av,\Psi)}_\Si <\ve/3$. Next, we extend the function $V_\ve'$ defined on $\Si$ to $\Dc\times\R^\dF$ by setting $V_\ve''(\xb,\yb)=V_\ve'(\xb,\vf(\xb))$, $\xb\in\Dc$. On $\Si$, we still  have $\|V-V_\ve''\|^{av,\Psi}_\Si <\ve/3.$ The resulting function $V_\ve''$ belongs to $L_2(\Dc\times (-r,r))$ for any $r>0$. Now consider the convolution  of $V_{\ve}''$ with some mollifier $\om(\xb)$. We obtain a smooth function $V_{\ve}'''$ depending, again, on $\xb$ only. The mollifier $\om$ can be chosen so that $\|V_\ve''-V_\ve'''\|^{(av,\Psi)}_{\Si}\le C\|V_\ve''(\cdot, \yb)-V_\ve'''(\cdot, \yb)\|_{L_2(\Dc)}<\ve/3$. Finally, multiplying
 $V_\ve'''$ by a smooth compactly supported cut-off function $\chi$, which equals $1$ near $\Si$, we obtain the required approximation $V_{\ve}$.
\end{proof}

We need approximating functions to satisfy an additional property (cf. Corollary \ref{cor.plusminus}).
\begin{lem}\label{Lem.Appr.2}  The function $V_{\ve}$ can be approximated by a function $\tilde{V}_\ve\in C_0^{\infty}(\Om)$ such that $\|V_{\ve}-\tilde{V}_{\ve}\|^{av,\Psi}_{\Si}<\ve$ and there exist open sets $\Om_{\pm}\subset \Om$ such that $\pm \tilde{V}_\ve(X)\ge 0$ for $X\in \Om_{\pm}$, $\tilde{V}_\ve(X)=0$ for $X\in\Om\setminus (\Om_+\cup\Om_-)$, and, finally,
$\dist (\Om_+,\Om_-)>0$.
\end{lem}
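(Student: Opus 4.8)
The plan is to start from the smooth, compactly supported function $V_\ve$ produced by Lemma \ref{Lem.App.1} and to modify it near the (smooth, closed) zero set $Z=\{V_\ve=0\}$ so as to create a positive-distance gap between the sets where it is positive and where it is negative, paying only an arbitrarily small price in the averaged norm $\|\cdot\|^{(av,\Psi)}_\Si$. The key point to exploit is that the averaged norm is controlled, via the trace/embedding inequality \eqref{Trace Embedding} of Lemma \ref{meascor} (with $\a=d$), by an $H^l(G)$-norm, and in particular, for bounded functions, it is dominated by an $L_2(\Om)$-type quantity up to a fixed constant; more simply, since $V_\ve$ is smooth and bounded, $\|W\|^{(av,\Psi)}_\Si \le C\,\|W\|_{L_\infty(\Si)}$ for any bounded $W$ supported near $\Si$ — this follows directly from the definition \eqref{AvNorm} because $\int_{E\cap\Mb}\F(|g|)d\m\le\m(E\cap\Mb)$ forces $\|g\|_{L_1(E\cap\Mb,\m)}$ to be bounded by a constant times $\m(E\cap\Mb)$ (Jensen / the elementary inequality $t\le\F(t)+C$). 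So it suffices to produce $\tilde V_\ve$ with $\|V_\ve-\tilde V_\ve\|_{L_\infty}$ small and with the required geometric separation.

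First I would fix the smooth function $g:=V_\ve\in C_0^\infty(\Om)$ and choose a small parameter $\delta>0$ (to be sent to $0$). Pick a smooth cut-off $\eta_\delta\in C^\infty(\R)$ with $\eta_\delta(s)=0$ for $|s|\le\delta$, $\eta_\delta(s)=1$ for $|s|\ge 2\delta$, $0\le\eta_\delta\le1$, and set
\begin{equation}\label{truncdef}
\tilde V_\ve(X):=\eta_\delta\big(g(X)\big)\,g(X).
\end{equation}
Then $\tilde V_\ve\in C_0^\infty(\Om)$, and $\tilde V_\ve-g$ is supported in $\{|g|\le 2\delta\}$ with $\|\tilde V_\ve-g\|_{L_\infty}\le 2\delta$, so by the observation above $\|V_\ve-\tilde V_\ve\|^{(av,\Psi)}_\Si\le C\cdot 2\delta<\ve$ for $\delta$ small. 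Now define
\begin{equation*}
\Om_+:=\{X\in\Om:\ g(X)>2\delta\},\qquad \Om_-:=\{X\in\Om:\ g(X)<-2\delta\}.
\end{equation*}
These are open, $\pm\tilde V_\ve\ge 0$ on $\Om_\pm$ (indeed $\tilde V_\ve=g$ there), $\tilde V_\ve=0$ on $\{|g|\le\delta\}\supseteq\Om\setminus(\Om_+\cup\Om_-)$ — wait, one must check the complement: on $\Om\setminus(\overline{\Om_+}\cup\overline{\Om_-})$ we have $|g|\le 2\delta$, which does not yet give $\tilde V_\ve=0$; so instead I would set $\Om_\pm:=\{\pm g>2\delta\}$ but enlarge the "vanishing band" by taking the cut in \eqref{truncdef} at levels $\delta,2\delta$ as above and then note $\Om\setminus(\Om_+\cup\Om_-)=\{|g|\le 2\delta\}$, on which $\tilde V_\ve$ need not vanish. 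To fix this cleanly, choose two thresholds $0<\delta<\delta'$ with the $\eta$ transition on $[\delta',2\delta']$, put $\tilde V_\ve=\eta_{\delta'}(g)g$, and set $\Om_\pm=\{\pm g>2\delta'\}$; then $\tilde V_\ve=g$ on $\Om_\pm$, $\tilde V_\ve=0$ on $\{|g|\le\delta'\}\supset \Om\setminus(\Om_+\cup\Om_-)$, as required, and $\|g-\tilde V_\ve\|_{L_\infty}\le 2\delta'$ is still small.

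Finally I must verify $\dist(\Om_+,\Om_-)>0$. Since $g$ is uniformly continuous (smooth with compact support in $\Om$), there is $\rho>0$ such that $|X-Y|<\rho\Rightarrow |g(X)-g(Y)|<4\delta'$; hence no point of $\Om_+$ (where $g>2\delta'$) is within distance $\rho$ of a point of $\Om_-$ (where $g<-2\delta'$), so $\dist(\Om_+,\Om_-)\ge\rho>0$. This completes the construction; choosing $\delta'$ so that $2C\delta'<\ve$ gives all the stated bounds. The main obstacle — really the only subtle point — is making sure the "neutral" region $\Om\setminus(\Om_+\cup\Om_-)$ is genuinely one on which $\tilde V_\ve$ vanishes identically (so that later, in the reduction via Corollary \ref{cor.plusminus}, the positive and negative pieces are honestly separated), which is why the truncation level for the cut-off must be chosen strictly below the level defining $\Om_\pm$; the averaged-norm estimate itself is immediate from the $L_\infty$ bound and the definition \eqref{AvNorm}.
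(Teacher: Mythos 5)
Your overall strategy is sound and is essentially the same as the paper's: modify $V_\ve$ only where $|V_\ve|$ is small, observe that the error is then small in $L_\infty$, and use that the averaged norm \eqref{AvNorm} of a bounded function is dominated by its sup-norm (via $t\le\F(t)+C$, exactly as you argue; the paper uses the same observation implicitly when it says $|V_\ve-\tilde V_\ve|\le\de_1$ suffices). The only real difference is that the paper cuts off \emph{spatially}, multiplying $V_\ve$ by a smooth $\chi_\ve$ vanishing on a neighborhood of the zero set $\Om_0=\{V_\ve=0\}$ and proving $\dist(\Om_+,\Om_-)>0$ by an intermediate-value argument along segments, whereas you cut off in the \emph{range} of $V_\ve$, which makes the distance estimate a one-line consequence of uniform continuity. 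That is a perfectly legitimate (and arguably cleaner) variant.

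However, your final construction, as written, does not satisfy the lemma. With the transition of $\eta_{\delta'}$ on $[\delta',2\delta']$ and $\Om_\pm=\{\pm g>2\delta'\}$, the set $\Om\setminus(\Om_+\cup\Om_-)$ equals $\{|g|\le 2\delta'\}$, which \emph{contains} $\{|g|\le\delta'\}$ rather than being contained in it; your inclusion $\{|g|\le\delta'\}\supset\Om\setminus(\Om_+\cup\Om_-)$ is backwards. On the transition band $\{\delta'<|g|\le 2\delta'\}$, which lies outside $\Om_+\cup\Om_-$, one has $\tilde V_\ve=\eta_{\delta'}(g)\,g\neq 0$ in general, so the requirement $\tilde V_\ve=0$ on $\Om\setminus(\Om_+\cup\Om_-)$ fails — ironically the very point you flag as the only subtle one. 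The repair is immediate: keep $\tilde V_\ve=\eta_{\delta'}(g)\,g$ but set $\Om_\pm:=\{X:\pm V_\ve(X)>\delta'\}$ (equivalently $\Om_\pm=\{\pm\tilde V_\ve>0\}$). These are open, $\pm\tilde V_\ve\ge 0$ on $\Om_\pm$ since $\eta_{\delta'}\ge 0$ there, $\tilde V_\ve\equiv 0$ on $\Om\setminus(\Om_+\cup\Om_-)\subset\{|g|\le\delta'\}$ because $\eta_{\delta'}$ vanishes there, and uniform continuity of $g$ together with the value gap $2\delta'$ between $\Om_+$ and $\Om_-$ still gives $\dist(\Om_+,\Om_-)>0$; the bound $\|V_\ve-\tilde V_\ve\|_{L_\infty}\le 2\delta'$ and hence the averaged-norm estimate are unchanged. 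With that one-line correction the proof is complete and matches the paper's in substance.
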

\begin{proof} Let the cylinder $\Dc\times [-N,N]$ contain the support of $V_{\ve}$. Consider the  closed set $\Om_0=\{X\in\overline{\Dc}\times [-N,N]:V_{\ve}(X)=0 \}$.
Since $V_{\ve}$ is uniformly continuous on the compact set $\overline{\Dc}\times [-N,N]$,  for any $\de_1>0$ there exists $\de_2>0$ such that $|V_{\ve}(X)|<\de_1$ for $X$ in the $\de_2$ - neighborhood of $\Om_0$. Take a  function $\chi_{\ve}\in C_0^\infty(\Om) $ such that $\chi_{\ve}(X)\in[0,1]$, $\chi_{\ve}(X)=0$ in the $\de_2/2$ neighborhood of $\Om_0$ and $\chi_{\ve}(X)=1$ outside the $\de_2$ neighborhood of $\Om_0$. Set $\tilde{V}_{\ve}=\chi_{\ve}{V}_{\ve}$. Then the sets $\Om_{\pm}=\{X: \pm \tilde{V}_{\ve}(X)>0\}$ satisfy the conditions of Lemma. Indeed, if $|X_+-X_-|<\de_2/2$ and $\pm \tilde{V}_{\ve}(X_\pm)>0$, then, by continuity of  $V_{\ve}$, there must exist a point $X_0\in\Om_0$ on the straight interval connecting $X_+$ and $X_-$, such that at least one of the distances $|X_\pm-X_0|$ is smaller than $\de_2/2$, and this contradicts the construction of $\Om_{\pm}$. Moreover, $|V_{\ve}-\tilde{V}_{\ve}|\le \de_1$ everywhere in $\Om$, and, by choosing $\de_1$ sufficiently small, we obtain the required approximation property.
\end{proof}

\section{The Asymptotic Formula}\label{7}
We now present the proof of Theorem \ref{ThmAs} for the case when there is just one surface $\Si$.
 \begin{proof} By the asymptotic perturbation lemma, together with our Lemmas \ref{Lem.App.1}, \ref{Lem.Appr.2} and Theorem \ref{ThEstimate}, it is sufficient to prove the eigenvalue asymptotics formula for the function $\tilde{V}_{\ve},$ $\ve>0,$ as above. We will denote it simply by $V$ in what follows.

By Corollary \ref{cor.plusminus}, to study the asymptotics of positive (negative) eigenvalues of $\Tb(V)$, it is sufficient to consider the operator with a non-negative (non-positive) function $V$.

So, with a smooth $V\ge0,$ we perform a reduction to the integral operator. We have already made the first step. We set $V=W^2 $ with $W\in C_0^\infty$, $W\ge0$, and  denote by $\G$ the operator of restriction of functions in  $H^{l}(\Om)$  to the surface $\Si$, $\G:H^{l}(\Om)\to L_2(\Si,\mu_\Si).$ The form \eqref{eq1} can now be written as

$    \Fb_V[u]=\langle(W\G\AF)u, (W\G\AF)u\rangle_{L_2(\Si,\mu_\Si)} =\langle(W\G\AF)^*(W\G\AF)u,u\rangle_{L_2(\Om)},$
with $(W\G\AF)$ considered as acting from $L_2(\Om)$ to $L_2(\Si,\mu_\Si)$.

It follows that $\Tb(V)=(W\G\AF)^*(W\G\AF)$.  The nonzero eigenvalues of the operator $\Tb$ in ${L_2(\Om)}$ coincide with the nonzero eigenvalues of the operator  $\Lb=(W\G\AF)(W\G\AF)^*=W[\G(\AF\AF^*) \G^*] W$ in $L_2(\Si,\mu_\Si)$. Here $\AF\AF^*$ is, up to a smoothing term, a nonnegative pseudodifferential operator of order $-2l=-\Nb$, with the principal symbol $|a_{-l}(X,\Xi)|^2$, thus it is a weakly polar integral operator. The corresponding Schwartz kernel  $\Lc(x,y)$ has the leading singularity $A(X)\log|x-y|+\vartheta(X,X-Y)$,
where $\vartheta(X,X-Y)$ is a function positively homogeneous in $X-Y$ of order zero and smooth in the first variable. Therefore, the operator $\Lb$ is an integral operator on $\Si$,
 \begin{equation}\label{eq3}
 (\Lb v)(X)=\int_\Si W(X)\Lc(X,Y)W(Y)v(Y) d\mu_{\Si}(Y),
\end{equation}
see, e.g., \cite{Taylor}, Ch. 2, especially, Proposition 2.6. The function $W$, defined initially on $\Si$, is, in fact, the trace on $\Si$ of a smooth function $\tilde{W}(X)=(\tilde{V}(X))^{1/2}$ defined on $\R^{\Nb}$. Hence, the operator in \eqref{eq3} takes the form
$  (\Lb v)(X)=\int_\Si \tilde{\Kc}(X,Y)v(Y) d\mu_{\Si}(Y)  ,$
where $\tilde{\Kc}$ is the integral kernel of the pseudodifferential operator  with the principal symbol $\tilde{V}(X)|a_{-l}(X,\Xi)|^2$.
We are now in the setting of the paper \cite{RT} (see also \cite{RT1}). By Theorem 6.4 there, the asymptotics of the eigenvalues of a weakly polar integral operator on a Lipschitz surface is given exactly by formula \eqref{eq5}. We mention  that the proof in those papers uses the asymptotic perturbation   Lemma 1.5 in \cite{BS} in the analysis of operator convergence when a Lipschitz surface is approximated by smooth ones in a special way. Thus, this beautiful invention by M. Birman and M. Solomyak is used twice in our study, in quite different settings.
\end{proof}
 Now we  justify the eigenvalue asymptotic formula \eqref{As.several} for the case when there are several Lipschitz surfaces of possibly different dimensions,  including, possibly,
dimension $d = \Nb$, codimension $\dF=0$, i.e. a domain in $\Om$ with an absolutely continuous measure. In order to avoid excessive complications in the proof, we impose a geometric restriction.

\begin{thm}\label{Thm.as.several}
 Let $\Si_j$, $j=1,\dots,J,$ be compact Lipschitz surfaces of dimension $d_j, 1\le d_j\le \Nb,$ with measures $P_j=V_j\m_{\Si_j}$, $V_j\in L^{\Psi}(\m_{\Si_j})$. We assume that the surfaces of the same dimension are disjoint. Then  the eigenvalues of  the operator $\Tb(P,\AF)=\sum\Tb(P_j,\AF)$ satisfy the asymptotic formula
 \begin{equation}\label{AsFormula}
 n_{\pm}(\la,\Tb(P,\AF))\sim \la^{-1}\sum_j C^{\pm}(V_j,\Si_j,\AF),
  \end{equation}
  where $C^{\pm}(V_j,\Si_j,\AF)$ are given by \eqref{eq5} with $V_j, \Si_j$ in place of $V, \Si$.
  \end{thm}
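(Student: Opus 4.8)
The plan is to reduce \eqref{AsFormula} to the single-surface asymptotics of Theorem \ref{ThmAs} (already proved above) by an inductive separation argument, the only new ingredient being a geometric perturbation that pushes apart surfaces of \emph{different} dimension that may intersect. Since Lemma \ref{LemLocalization} (Localization 3) already handles the case when all supports are pairwise separated by positive distance, the whole difficulty is concentrated in the possibility that a lower-dimensional $\Si_i$ meets a higher-dimensional $\Si_j$ (the hypothesis only forbids intersections of surfaces of \emph{equal} dimension).

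First I would dispose of the easy case: if $\dist(\supp P_i,\supp P_j)>0$ for all $i\neq j$, then iterating \eqref{Localization} $J-1$ times gives
\begin{equation*}
n_{\pm}(\la,\Tb(P,\AF))=\sum_j n_{\pm}(\la,\Tb(P_j,\AF))+o(\la^{-1}),
\end{equation*}
and each summand is replaced by $\la^{-1}C^{\pm}(V_j,\Si_j,\AF)$ via Theorem \ref{ThmAs}, yielding \eqref{AsFormula}. For the general case I would order the surfaces by increasing dimension and treat them one at a time: fix the surface $\Si_1$ of smallest dimension $d_1$; by Lemma \ref{Lem.App.1} and Lemma \ref{Lem.Appr.2} it suffices to prove the asymptotics for a smooth, compactly supported $V_1$, so I may assume $V_1\in C_0^\infty$. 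Choose a small $\e>0$ and a smooth cut-off splitting of $V_1$ into a piece supported in an $\e$-neighbourhood of the (finitely many) intersection points $\Si_1\cap(\cup_{j\ge2}\Si_j)$ and a piece $V_1^{(\e)}$ supported away from all other surfaces; the first piece has averaged norm $\le C\e$ by uniform continuity of $V_1$ and the $d_1$-Ahlfors regularity of $\m_{\Si_1}$ (the $\m_{\Si_1}$-measure of an $\e$-neighbourhood of a lower-dimensional set on $\Si_1$ is $o(1)$), so by Theorem \ref{ThEstimate} its contribution to $\Db^{\pm}$ is $\le C\e$. The remaining piece $P_1^{(\e)}=V_1^{(\e)}\m_{\Si_1}$ is separated by a positive distance from $\cup_{j\ge2}\supp P_j$, so Lemma \ref{LemLocalization} gives
\begin{equation*}
n_{\pm}\big(\la,\Tb(\textstyle\sum_{j\ge1}P_j)\big)=n_{\pm}\big(\la,\Tb(P_1^{(\e)})\big)+n_{\pm}\big(\la,\Tb(\textstyle\sum_{j\ge2}P_j)\big)+o(\la^{-1}).
\end{equation*}
Letting $\e\to0$ after applying Theorem \ref{ThmAs} to $\Tb(P_1^{(\e)})$ (whose coefficient tends to $C^{\pm}(V_1,\Si_1,\AF)$), and inducting on $J$ for the second term, completes the argument; the asymptotic perturbation Lemma 1.5 of \cite{BS} is what licenses passing $\e\to0$ uniformly.

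The main obstacle is the geometric step: controlling the averaged $L^\Psi$-norm of $V_1$ near the set where $\Si_1$ punctures the higher-dimensional surfaces, and more subtly, guaranteeing that this intersection set is $\m_{\Si_1}$-small and geometrically simple enough (finite union of lower-dimensional Lipschitz pieces) that an $\e$-neighbourhood of it on $\Si_1$ really does have $\m_{\Si_1}$-measure $o(1)$. This is where the hypothesis that same-dimensional surfaces are disjoint, plus the Lipschitz regularity of all the $\Si_j$, is used — it ensures $\Si_i\cap\Si_j$ for $i\ne j$ has dimension strictly smaller than $\min(d_i,d_j)$ after a generic perturbation (or is empty), so the averaged-norm estimate degrades to $o(1)$ rather than $O(1)$. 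I would isolate this as a short geometric sublemma; everything else is a routine reprise of the bookkeeping already carried out in Section \ref{Reductions} and Section \ref{7}.
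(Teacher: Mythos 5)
Your overall scheme (order the surfaces by dimension, separate supports, apply Lemma \ref{LemLocalization} and Theorem \ref{ThmAs}, and pass to the limit via Theorem \ref{ThEstimate} and the Birman--Solomyak asymptotic perturbation lemma) is the same as the paper's, but the key cutting step is done on the wrong measure, and this is a genuine gap. You truncate $V_1$, the weight on the \emph{lowest}-dimensional surface, in an $\e$-neighbourhood of $\Si_1\cap\bigl(\cup_{j\ge2}\Si_j\bigr)$, and for the discarded piece to have small averaged $\Psi$-norm you need $\m_{\Si_1}\bigl(\Si_1\cap\cup_{j\ge2}\Si_j\bigr)=0$ (your ``finitely many intersection points''). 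Nothing in the hypotheses gives this: only surfaces of \emph{equal} dimension are assumed disjoint, so a lower-dimensional $\Si_1$ may lie entirely inside, or overlap on a set of positive $\m_{\Si_1}$-measure with, a higher-dimensional $\Si_j$ (e.g.\ a Lipschitz curve contained in a $2$-dimensional Lipschitz surface). In that situation your cut removes a non-negligible part of $V_1$ and the claimed $O(\e)$ bound on $\Db^{\pm}$ of the discarded piece fails, while the theorem itself is still true. The proposed rescue by a ``generic perturbation'' of the surfaces is not available either: the $\Si_j$ are data of the problem, and perturbing them changes the operator and the asymptotic coefficients.

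The paper's proof avoids this by cutting in the opposite direction: it leaves $P_1$ untouched and multiplies each higher-dimensional weight $V_j$, $j>1$, by the characteristic function of the complement of a $\de$-neighbourhood $U_1(\de)$ of $\Si_1$. The relevant smallness is then $\m_{\Si_j}\bigl(\Si_j\cap U_1(\de)\bigr)=O(\de^{\,d_j-d_1})\to0$ (and $\Si_j\cap U_1(\de)=\varnothing$ for small $\de$ when $d_j=d_1$, by the disjointness hypothesis), which holds \emph{even if} $\Si_1\subset\Si_j$, because a tubular neighbourhood of a $d_1$-dimensional set carries little $d_j$-dimensional measure when $d_j>d_1$. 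After this the support of the modified $P_j$, $j>1$, is at positive distance from $\Si_1$; iterating the same surgery for $\Si_2,\Si_3,\dots$ produces separated measures, to which Lemma \ref{LemLocalization} and the already proved single-surface Theorem \ref{ThmAs} apply, and the small-averaged-norm perturbations are absorbed by Theorem \ref{ThEstimate} together with Lemma 1.5 of \cite{BS}. If you redirect your cut-off onto the higher-dimensional measures in this way, the rest of your argument goes through essentially verbatim.
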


 \begin{proof}
 If the surfaces $\Si_j$ are disjoint, so that all mutual distances are positive, the result follows by the inductive application of Lemma \ref{LemLocalization}. Otherwise, we construct an approximation of the measure $P=\sum P_j$ by measures with disjoint surfaces $\Si_j$.

 Relabeling the surfaces if necessary, we can assume that their dimensions increase with $j$, i.e. $d_1 \le d_2 \le \cdots \le d_J$.
 We start with  $\Si_1$. Consider the $\de$-neighborhood $U_1(\de)$ of $\Si_1$ in $\R^{\Nb}$.
 If $d_j > d_1$, then the surface measure $\m_{\Si_j}(\Si_j\cap U_1(\de))$, $j>1$, decays at least as $\de^{d_j-d_1}$ for $\de\to 0$. If $d_j = d_1$, then, according to our assumption,
 $\Si_j\cap U_1(\de) = \emptyset$ for all sufficiently small $\de > 0$. Either way,
 $\m_{\Si_j}(\Si_j\cap U_1(\de))$, $j>1$, tends to zero as $\de\to 0$. Therefore, for $\de$ small enough, the averaged $\Psi$-norm of each $V_j$, $j>1,$ over  $\Si_j\cap U_1(\de)$ can be made arbitrarily small. We set $V_j^{(1)}(X)=V_j(X) (1-\chi(X))$, $j>1$, where $\chi$ is the characteristic function of $U_1(\de)$. Then the averaged norm of $V_j-V_j^{(1)}$ is small and $\Si_1$ is separated from the support of $V_j^{(1)},\, j>1.$ Next we consider $\Si_2^{(1)}=\Si_2\cap\supp V_2^{(1)}$. This piece of the surface $\Si_2$ is separated from $\Si_1$, but may cross $\Si_j$, $j>2$. We repeat with $\Si_2^{(1)}$ the same procedure as above, considering its sufficiently small neighborhood in $\R^\Nb$ and then killing the remaining measures in this neighborhood. In this way, after a finite number of steps, we arrive at a system of separated measures, to which Lemma \ref{LemLocalization} can be applied, with further application of the already proved case of Theorem \ref{ThmAs}. In this construction, we  introduce  perturbations of $V_j$ with small averaged  $\Psi$-norms. By Theorem \ref{ThEstimate} and, again, Lemma 1.5 in \cite{BS}, the asymptotic eigenvalue formula is thus justified in full.
 \end{proof}



\begin{acknowledgments}
G.R. : The paper received a support from the RScF (Project 20-11-20032)
\end{acknowledgments}

\small

\end{document}